\numberwithin{equation}{section}
\newcommand{\R}{\mathbb{R}}
\newcommand{\N}{\mathbb{N}}
\newcommand{\C}{\mathcal{O}(1)}
\newcommand{\T}{\mathbb{T}}
\newtheorem{lemma}{Lemma}
\newtheorem{thm}{Theorem}[section]
\theoremstyle{definition}
\newtheorem{defn}[thm]{Definition}
\newtheorem{remark}{Remark}[thm]
\begin{document}

\date{\today}
\title{Borel Cantelli Lemmas and Extreme Value Theory for Geometric Lorenz Models}

\author{Licheng Zhang\thanks{The research is part of my thesis and it was partially supported by the NSF grant DMS 1101315. I would like to thank Matthew Nicol for discussion, comments and advice. I also wish to thank S. Vaienti and M. Holland.}}

\maketitle

\begin{abstract}

We establish dynamical  Borel-Cantelli lemmas  for nested balls and rectangles centered at generic points in the setting of geometric Lorenz maps. We also 
establish extreme value statistics for observations maximized at generic points for geometric Lorenz maps and  the associated flow.
\end{abstract}

\tableofcontents{}

\section{Introduction}
In a chaotic system, the future behavior of the system is very sensitive to the initial conditions and so a statistical
description of the system's behavior is often the most appropriate.  
We may investigate whether  suitable versions of classical limit  theorems from probability theory such as law of large numbers, central limit theorem, Borel-Cantelli lemma, extreme value theory and so on,
hold and use this knowledge to make predictions about the system's behavior. In this paper, we study a particular system, which is Lorenz system, and establish Strong Borel Cantelli lemma and Extreme Value Laws for it. 


The equations defining the Lorenz system were first published in the \textit{Journal of Atmospheric Sciences}(\cite{Lorenz}) as a parametrized polynomial system of differential equations:
\begin{align*}
  \dot{x} &= \sigma (y-x)        \\
  \dot{y} &= x(\rho - z) - y     \\
  \dot{z} &= xy -\beta z         \\
\end{align*}
where $\sigma = 10$, $\rho = 28$, $\beta = 8/3$. The system was proposed  as a simplified model for thermal fluid convection, 
motivated by a desire to understand   weather systems. 
What is interesting is that the equations are deterministic but they produce chaotic behavior, with trajectories spiraling around two attractors seemingly randomly. 
\begin{figure}[htbp]
 \centering
 \includegraphics[width= 0.65\textwidth]{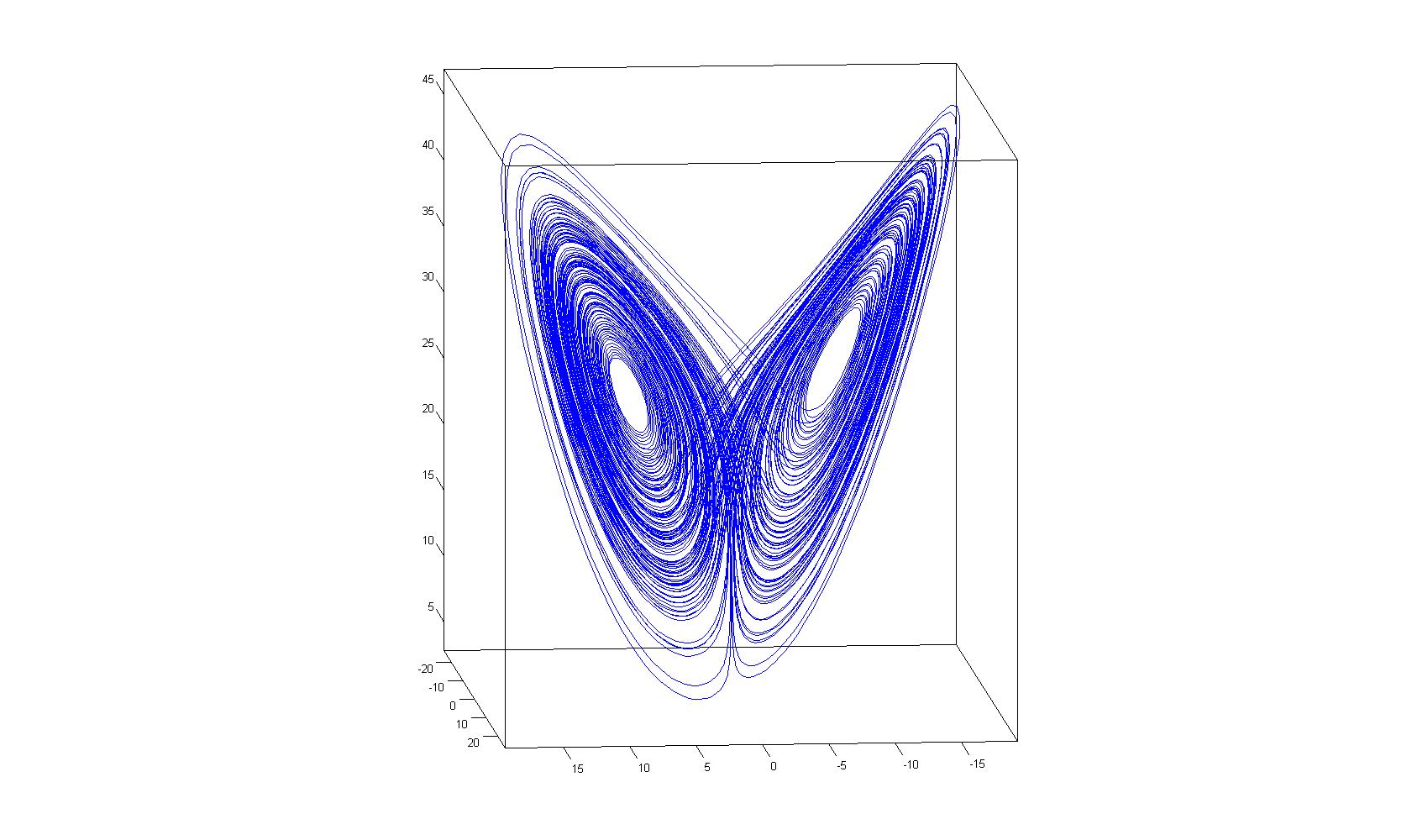} 
\centering
\caption{Lorenz attractor}
\end{figure}
In order to achieve insights  on this system, 
a very successful approach was taken by Afraimovich, Bykov and Shil'nikov\cite{ABS}, and Guckenheimer, Williams\cite{GW}, independently: they constructed the so-called \textsl{Geometric Lorenz models}.  
These models are flows in three-dimension which have properties very similar to the Lorenz systems and are easier to study. One can rigorously prove the existence of an attractor that contains an equilibrium point of the flow, together with regular solutions. The original proof of the existence of a chaotic attractor was made by Warwick Tucker in the year 2000, with the help of computer (see \cite{Tucker1, Tucker2}).

Here we give a brief version of construction of the Geometric Lorenz model, and more detailed version can be found in \cite[section 2.1]{GP}. Consider a linear system in $[-1, 1]^3$:
\[
 (\dot{x}, \dot{y}, \dot{z}) = (\lambda_1 x, \lambda_2 y, \lambda_3 z)
\]
with $\lambda_1$, $\lambda_2$, $\lambda_3$ satisfying
\[
 0<\frac{\lambda_1}{2} \le - \lambda_3 < \lambda_1 < -\lambda_2
\]
For any initial point $(a,b,c) \in \R^3$ near the equilibrium $(0,0,0)$, the trajectories are given by 
\[
 \tilde{L}_t (a,b,c) = (a e^{\lambda_1 t}, b e^{\lambda_2 t},c e^{\lambda_3 t})
\]
where $\tilde{L}_t$ denotes the linear flow.  

Consider $\Omega = \{ (x,y,1) : |x| \le \frac{1}{2},~ |y| \le \frac{1}{2} \} = \Omega^- \cup \Omega^o \cup \Omega^+$, where 
\[
 \Omega^- = \{ (x, y, 1) \in \Omega: x<0 \}
\]
\[
 \Omega^+ = \{ (x, y, 1) \in \Omega: x>0 \}
\]
\[
 \Omega^o = \{ (x, y, 1) \in \Omega: x=0 \}
\]
$\Omega$ is a transverse section to the linear flow $\tilde{L}_t$, and since $\lambda_3 < 0$ , every trajectory, that would cross $\Omega$, will cross in the direction of the negative $z$ axis. Let $\Omega^* = \Omega^- \cup \Omega^+$ and let $\tilde{\Omega} = \{ (x,y,z): |x| = 1\} = \tilde{\Omega}^- \cup \tilde{\Omega}^+$ with $\tilde{\Omega}^{\pm} = \{(x,y,z): x =\pm 1\}$. For each $(a,b,1) \in \Omega^*$, the time $t$ such that $\tilde{L}_t(a,b,1) \in \tilde{\Omega}$ is given by 
\[
|a e^{\lambda_1 t}| = 1 ~\Longrightarrow~ t(a) = -\frac{1}{\lambda_1}\log |a|
\]
the time only depends on the first component of the point in $\Omega^*$ and $t(a) \to \infty$ as $a \to 0$. Thus, we can express the point in $\tilde{\Omega}$ mapped from point $(a,b,1) \in \Omega^*$ explicitly:
\[
\tilde{L}_{t(a)}(a,b,1) = (sgn(a), b e^{\lambda_2 t(a)}, e^{\lambda_3 t(a)})= (sgn(a), b |a|^{-\frac{\lambda_2}{\lambda_1}}, |a|^{-\frac{\lambda_3}{\lambda_1}})
\]
where $sgn(a) = a/|a|$ for $a \neq 0$. In this way, we just defined a map $L: \Omega^* \to \tilde{\Omega}^\pm $ by 
\[
L(x,y,1) = (sgn(x), y|x|^\beta, |x|^\alpha)
\]
where $\beta = -\frac{\lambda_2}{\lambda_1}$, $\alpha = -\frac{\lambda_3}{\lambda_1}$ satisfying 
$\frac{1}{2} < \alpha <1<\beta$, since $0<\frac{\lambda_1}{2} \le - \lambda_3 < \lambda_1 < -\lambda_2$.

Then we should let the sets $L(\Omega^*)$ return to the cross section $\Omega$ through a flow defined by a suitable composition of a rotation $R_\pm$, an expansion $E_{\pm \theta}$ and a translation $\T_\pm$. More precisely, for $(x,y,z) \in \tilde{\Omega}^\pm$,
\[
R_\pm (x,y,z)=\begin{pmatrix}
0 & 0 & \pm 1 \\ 0 & 1 & 0 \\ \pm 1 & 0 & 0
\end{pmatrix}
\]
and for $(x,y,z) \in \Omega$,
\[
E_{\pm \theta}(x,y,z) = \begin{pmatrix}
\theta & 0 & 0 \\
0 & 1 & 0 \\
0 & 0 & 1
\end{pmatrix} .
\]
$\theta$ and $\T_\pm$ shall be chosen to satisfy certain conditions.

So the Poincar\'e first return map, i.e. our Lorenz map, $F: \Omega^* \to \Omega$, is defined as 
\[
F(x,y) = 
\begin{cases}
\T_+ \circ E_{+\theta} \circ R_+ \circ L(x,y,1)& \mbox{for } x> 0\\
\T_- \circ E_{-\theta} \circ R_- \circ L(x,y,1)& \mbox{for } x< 0\\
\end{cases}
\]
Combining  the effect of the rotation with expansion and translation, $F$ must have the form: 
\[
F(x,y) = (T(x), G(x,y))
\] 
where $T:I\backslash \{0\} \to I$ and $G: (I\backslash\{0\}) \times I \to I$, where $I=[-\frac{1}{2}, \frac{1}{2}]$. 
Here $T$ is given by 
\[
T(x) = \begin{cases}
f_1(x^{\alpha}), & x<0 \\
f_0(x^{\alpha}), & x>0
\end{cases}
\]
with $f_i(x)= (-1)^i \theta \cdot x +b_i, i\in \{ 0,1\}$ such that $\theta \cdot (\frac{1}{2})^\alpha <1$ and $\theta \cdot \alpha \cdot 2^{1-\alpha} >1$. $T$ is the quotient map of $F$, 
usually referred to as the Lorenz like map, see Figure \ref{lorenzlike}. 
\begin{figure}[htbp]
\centering
\includegraphics[width=0.4\textwidth]{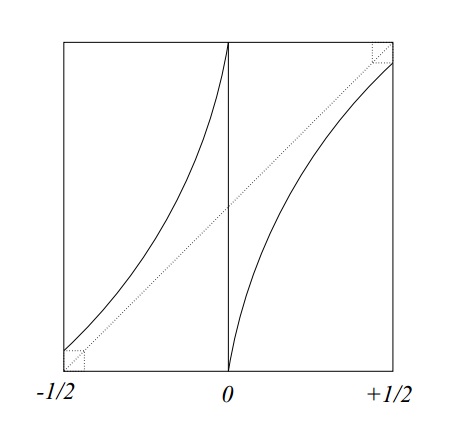}
\caption{Lorenz like map $T$}\label{lorenzlike}
\end{figure}
It has the following properties:
\begin{enumerate}
\item $T$ is discontinuous at $x = 0$, and the lateral limits $T(0^\pm)$ do exist, $T(0^\pm) = \mp \frac{1}{2}$.
\item $T$ is $C^2$ on $I\backslash \{0\}$ and $T'(x) > 1$ for all $x \in I\backslash \{0\}$.
\item $\lim\limits_{x \to 0} T'(x) = +\infty$
\end{enumerate}

And G is given by 
\[
G(x,y) = \begin{cases}
g_1(x^\alpha, y \cdot x^\beta), & x<0 \\
g_0(x^\alpha, y \cdot x^\beta), & x>0
\end{cases}
\]
where $g_1 \vert I^- \times I \to I$ and $g_0 \vert I^+ \times I \to I$ are suitable affine maps. Here $I^- = (-1/2,0)$, $I^+=(1/2,0)$.

Much recent work has focused on the ergodic and statistical properties  of Lorenz like maps
including rates of mixing, extreme value theory and  return time statistics.  S. Galatolo and M.J. Pacifico \cite{GP} proved that the Poincar\'e map, i.e. our Lorenz map $F$, 
associated to a Lorenz like flow has exponential decay of correlations with respect to Lipschitz observables and the hitting time statistics satisfies a logarithm law.

\subsection{Local dimension}\label{localdim}

 Let $(M,d)$ be a metric space and assume that $\mu$ is a Borel probability measure on $M$. Given $x \in M$, let $B_r(x) = \{ y \in M: d(x,y) \le r \}$ be the ball centered at  $x$ 
with radius $r$. The {\sl local dimension} of $\mu$ at $x \in M$ is defined by 
\[
 d_{\mu} (x)= \lim_{r \to 0} \frac{\log \mu(B_r(x))}{\log r}
\]
if this limit exists. In this case $\mu(B_r(x)) \sim r^{d_\mu (x)}$.

A result of Afraimovich and Pesin~\cite[Theorem 9]{AP} ensures that for the  Lorenz system, the local dimension exists and is constant for $\mu$ a.e. point.  

\subsection{Young Tower Structure and Local Product Structure}\label{localstr}

A recent paper of Araujo, Melbourne and Varandas~\cite{AMV} used a Young Tower construction to establish that a broad range of geometric Lorenz flows are rapidly mixing. Along the way they
showed that $\mu$ a.e. $p \in M$ has a local product structure (this follows from their Proposition 2.4). More precisely, $\mu$ a.e. $p$ has the property that there exists a $r(p)>0$ such that for all $r<r(p)$
if $A(r)$ is a square of  sidelength $2r$, or a ball of radius $r$, centered at $p$, then $\mu$ a.e. $q\in A(r)$ has a local unstable manifold and a local stable manifold  which fully crosses $A(r)$. Moreover
if $q_1$, $q_2$ are in $A(r)$ then there is a unique point $z=W^{u}_{loc} (q_1) \cap W^{s}_{loc} (q_2)\in A(r)$.

\subsection{Short returns for one-dimensional  Lorenz-like maps}

Gupta, Holland and Nicol~\cite{GHN} established extreme value statistics for Lorenz-like maps. The proofs used a crucial estimate on the measure of points with 
short returns. In particular, on page 21 they showed that for $\mu$ a.e. $p\in I$, and for all sufficiently small $r<r(p)$, if $B_r(p)$ is  a ball of radius $r$ based at $p$, then there are constants $C>0$, $0<\alpha<1$ 
such that for all $1\le j \le (\log r)^5$, $\mu (B_r \cap T^{-j} B_r) \le \mu (B_r) e^{-(\log r)^{\alpha}}$.

\subsection{Borel Cantelli Lemma}
The classical Borel-Cantelli lemmas are as follows: suppose $(\Omega, \mathcal{B}, \mu)$ is a probability space. Let $\textbf{1}_{A}$ be the characteristic function of $A$, given $A$ is a measurable set of $\Omega$. Then
\begin{enumerate}
 \item if $(A_n)_{n=0}^{\infty}$ is a sequence of measurable sets in $\Omega$ and $\sum_{n=0}^\infty \mu(A_n) <\infty$, then $\mu(x \in A_n~ i.o.)=0$
 \item if $(A_n)_{n=0}^{\infty}$ is a sequence of independent sets in $\Omega$ and  $\sum_{n=0}^\infty \mu(A_n) =\infty$, then for $\mu$ a.e. $x\in \Omega$
 \[
 \frac{S_n(x)}{E_n} \to 1
 \]
 where $S_n(x) = \sum_{j=0}^{n-1} \textbf{1}_{A_j}(x)$ and $E_n = \sum_{j=0}^{n-1} \mu(A_j)$.
\end{enumerate}
In the dynamical systems setting, $T:\Omega \to \Omega$ is usually considered to be a measure-preserving transformation of 
the probability space $(\Omega, \mathcal{B}, \mu)$. Suppose that $(A_n)_{n=0}^{\infty}$ is a sequence of sets in $\mathcal{B}$ 
such that $\sum_{n=0}^\infty \mu(A_n) =\infty$.  Let $E_n = \sum_{j=0}^{n-1} \mu(A_j)$ and 
$S_n(x) = \sum_{j=0}^{n-1} \textbf{1}_{A_j} \circ T^j (x) $. Then we call the sequence $(A_n)$: 
\begin{enumerate}
\item {\bf a Borel Cantelli sequence(BC)} if $\mu(x: T^n x \in A_n~ i.o.)=1$, i.e. $S_n(x)$ is unbounded.
\item {\bf a Strong Borel Cantelli sequence(SBC)} if  $\lim\limits_{n\to \infty}\frac{S_n(x)}{E_n} = 1, ~a.s.$.
\end{enumerate}

\begin{remark}
 If the sequence $(A_n)_{n=0}^{\infty}$ are nested balls of radius $r_n$ about a point $p$ in the dynamical system $(T, \Omega, \mu)$, then the question of whether $T^i x \in A_i$ infinitely often for $\mu$ a.e. $x$ is called the shrinking target problem. For the rest of the paper, we establish strong Borel Cantelli Lemmas and Extreme Value Laws for the Lorenz maps $F$ regards to shrinking target property. 
\end{remark}


\subsection{Extreme Value Laws(EVL)}\label{evl}


We consider a dynamical system $(\Omega, \mathcal{B}, \mu, F)$ where $F$ preserves  an invariant measure $\mu$. Consider the time series $X_0, X_1, X_2 \cdots$ arising 
from this system  by evaluating a given random variable(r.v.) $\varphi: \Omega \rightarrow 
\mathbb{R} \cup 
\{\pm \infty \}$ along the orbits of the system, that is to say, we define
\begin{equation}
\label{proc}
 X_n = \varphi \circ F^n
\end{equation}
for each $n \in \mathbb{N}$. Apparently, $X_0, X_1, \cdots$ defined in this way is not an
independent sequence, but $F$-invariance of $\mu$ guarantees that the stochastic process is stationary.

Here we suppose that $\varphi$ has one global maximum at $\zeta\in \Omega$ ($\varphi(\zeta)=+\infty$ is allowed).  And let $u_F:=\varphi(\zeta)$. By assuming that $\varphi$ and $\mu$ are sufficiently regular,  the event 
\begin{equation*}
U(u):=\{x\in \Omega:\; \varphi(x)>u\}=\{X_0>u\}
\end{equation*} 
corresponds to a topological ball centered at $\zeta$ for $u$ sufficiently close to $u_F$. Furthermore, $\mu(U(u))$ varies continuously as a function of $u$ on a neighbourhood of $u_F$.

\begin{defn}(Logarithmic singularity)
 Consider a function $\varphi: \Omega \to \R$ and a point $x_0 \in \Omega$. Let $d$ be a distance function on $\Omega$. We say that $\varphi$ has a logarithmic singularity at the point $x_0$ if $x_0$ has a neighborhood where $\varphi(x) = - C \log d(x,x_0) + g(x)$ with $C > 0$ , where $g$ is bounded and has a finite limit as $x \to x_0$. 
\end{defn}

We are always interested in studying the extremal behavior of the stochastic process $X_0, X_1,\ldots$, and it is associated with the occurrence of exceedances of high levels $u$. 
 When $u$ is close to $u_F$, the occurrence of the event $\{X_j>u\}$ means that  the occurrence of an exceedance at time $j\in\mathbb{N}_0$. This is equivalent to saying that $F^j(x)\in U(u)$, i.e. the orbit of the point $x$ hits the ball $U(u)$ at time $j$.

In order to consider the extremal behavior of the system for which we define a new sequence of random variables $M_1, M_2, \cdots$ given by
\[
M_n = \max\{X_0, \cdots,X_{n-1} \}
\]

\begin{defn}(Extreme Value Laws)
We say that we have an $EVL$ for $M_n$ if there is a non-degenerate distribution function(d.f.) $G: \R \to [0,1]$ with $G(0)=0$ and, for every $\upsilon >0$,
there exists a sequence of levels $u_n = u_n(\upsilon)$, $n=1,2,\cdots$, such that
\begin{equation}
\lim_{n\to\infty}n\mu(X_0>u_n)=\upsilon \label{rate}
\end{equation}
and for which the following holds:
\[
\mu(M_n \le u_n) \to \bar{G}(\upsilon) \text{ as $n \to \infty$}
\]
where $\bar{G} = 1- G$. 
\end{defn}

The motivation for using such normalising sequences \eqref{rate} comes from the case when $X_0, X_1,\ldots$ are independent and identically distributed (i.i.d.). 
In this i.i.d.\ setting, it is clear that $P(M_n\leq u)= (Z(u))^n$, where $Z$ is the distribution function\ of $X_0$, i.e. $Z(x):=P(X_0\leq x)$. Therefore, condition \eqref{rate} indicates that
\[
P(M_n\leq u_n)= (1-P(X_0>u_n))^n\sim\left(1-\frac\upsilon n\right)^n\to e^{-\upsilon},
\]
as $n\to\infty$. This implies that, approximately, the waiting times between exceedances of $u_n$ is exponentially distributed. Moreover, the reciprocal is also true.
Note that in this case $G(\upsilon)= 1 - e^{-\upsilon}$ is the standard exponential d.f..

\vspace{.5cm}

\begin{remark}
 We will give result on Lorenz flows $f_t$, in which case we consider continuous time stochastic process $\{X_t\}$ and define the process of successive maxima $M_T:= \sup_{0\le t \le T} \{X_t\}$.
\end{remark}

\begin{remark}
For independent and identically distributed(i.i.d.) processes, there are only three possible types of non-degenerate extremal distributions (subject to linear scaling):
\begin{itemize}
\item Type I    $$G(x) = e^{-e^{-x}},  -\infty < x < \infty$$
\item Type II   $$G(x) = \begin{cases}
0   & \mbox{if } x<0; \\
e^{-x^{-\alpha}} & \mbox{for some $\alpha$ if } x>0 .
\end{cases}
$$
\item Type III
\[
G(x) = \begin{cases}
e^{-(-x)^\alpha} & \mbox{for some $\alpha >0$ if } x<0;\\
1 & \mbox{if } x>0.
\end{cases}
\]
\end{itemize}
For dependent stationary processes $\{X_n\}$, $EVL$ will satisfy under $D_3(u_n)$ and $D'(u_n)$, which will be introduced in the next subsection. 
\end{remark}

For $\mu$ a.e. $x_0$, if we consider
\begin{equation}
 \label{observable}
\varphi(x) = - \log d(x, x_0)
\end{equation}
where $d(\cdot, \cdot)$  is the local metric on $\Omega$ and $x_0$ is a fixed point.  And define $U_n = \{X_0 > u_n \}$, where $u_n = u_n(v)$ such that $\mu(U_n) = e^{-v}/n$.  Here $u_n$ is an increasing sequence going to $\varphi(x_0)$(which is $+\infty$) and assume $U_n$ corresponds to a topological ball centered at $x_0$ with radius $e^{-u_n}$. Then the corresponding processes $\{ X_n\}$ will satisfy Type I extremal distribution.  

To be consistent, if not specified,  we will use the defintion and assuption above for section on EVL.
\begin{remark}
We define a function that we refer to as \textsl{first hitting time function} to a set $A\in \mathcal{B}$, and denote by $r_A:\Omega \to \N 
\cup \{\infty\}$ where
\[
r_A(x) = \min\{j \in \N \cup \{\infty\}: F^j(x) \in A\}
\]
The restriction of $r_A$ to $A$ is called the \textsl{first return time function} to $A$, denoted by $R(A)$, as the minimum of the return time function to $A$, $i.e.$
\[
R(A) = \min_{x\in A}r_A(x)
\]
In\cite{FFT3}, the link between $EVL$ and {\sl Hitting Time Statistics} (HTS)/ {\sl Returning Time Statistics} (RTS) (for balls) of stochastic processes defined by \eqref{proc} was established and for systems with an absolutely continuous invariant measure, they have been shown to be equivalent to each other, i.e. if such processes have an EVL $G$ then the system has HTS $G$ as well for balls ``centered'' at $\zeta$ and vice versa.  So it is natural to use the observable \eqref{observable}, it gives balls centered at $x_0$. And we have:
\[
 \{M_n \le u_n\} = \{r_{\{X_0 > u_n\}} >n\} = U_n^c
\]

\end{remark}
We may also consider the statistics of multiple returns, which we discuss in  the next subsection.

\subsubsection{Rare events points processes and respective convergence}
Let's introduce some formalism first. Let $\mathcal{W}$ denote the semi-ring of subsets of $\R^{+}$ whose elements are half closed half open intervals $[b,c)$, for $b, c \in \R^{+}$. Let $\mathcal{V}$
be the ring generated by $\mathcal{W}$. For each element $I \in \mathcal{V}$, there exist $k \in \N$ and $k$ intervals $J_1, \cdots, J_k \in \mathcal{W}$ such that $I = \cup_{j=1}^k J_j$. 
For $J=[b,c) \in \mathcal{W}$ and $\alpha \in \R$, we denote $\alpha J : = [\alpha b, \alpha c)$ and
$J+\alpha: = [b +\alpha, c+\alpha)$. Similarly, for $I \in \mathcal{V}$, we define $\alpha I : = \alpha J_1 \cup \cdots \cup \alpha J_k$ and $I+ \alpha :=(J_1+\alpha) \cup \cdots \cup (J_k + \alpha)$. 

\begin{defn}[Rare Event Point Process]
For stationary processes $X_0, X_1, \cdots$ and sequences $(u_n)_{n\in \N}$ satisfying \eqref{rate}, we define the \textsl{Rare Event Point Process}(REPP) by setting: 
\[
N_n(I):= \sum_{j \in a_n I \cap \N_0} \textbf{1}_{\{X_j > u_n\}}~~ \text{for every $I \in \mathcal{V}$}
\]
where $a_n I \in \mathcal{V}$, is the re-scaled time period. 
And $a_n$ is taken to be $1/\mu(X_0 > u_n)$, according to Kac's Theorem, 
the expected waiting time before the occurrence of one exceedance. In fact, this is counting the number of exceedances during the re-scaled time period $a_n I$. 

\end{defn}

\begin{defn}
[Poisson process of intensity $\theta$]
Let $S_1, S_2, \cdots$ be an i.i.d. sequence of random variables with common exponential distribution of mean $1/\theta$. Given this sequence of r.v., for $I \in \mathcal{V}$, set
\[
N(I) = \int \textbf{1}_I d(\sum_{i=1}^{\infty} \delta_{S_1+\cdots+S_i})
\]
where $\delta_l$ denotes the Dirac measure at $l>0$ and, as before, $I \in \mathcal{V}$. We call such $N$ is a \textit{Poisson process of intensity} $\theta$. In special case, when $I = [0, t)$, we also denote $N([0, t))$ by $N(t)$.
\end{defn}

\begin{remark}
If $\theta=1$ then  $N$ is called a standard Poisson process and, for every $t > 0$, the random variable $N(t)$ has a Poisson distribution of mean $t$.
\end{remark}

In \cite[page 4]{FHN},  two conditions $D_3(u_n)$ and $D'(u_n)$ are given  on the dependence structure of a general stationary stochastic process to ensure that the REPP $N_n$ converges in distribution to a standard Poisson process. Also they ensure dependent stationary stochastic processes to have EVL.
These two conditions are the following: 


\textbf{Condition} $(D_3(u_n))$. We say that $D_3(u_n)$ holds for the sequence $X_0, X_1, X_2, \cdots$ if for any integers $l$, $t$ and $n$ 
\[
\bigg\vert\mu(\{X_0 > u_n\} \cap \{M_{t,l}\le u_n\})-\mu(\{X_0 > u_n\})\mu(\{M_l\le u_n\})\bigg\vert \le \gamma(n,t)
\]
where $ M_{t, l}= \max \{X_t, X_{t+1}, \cdots, X_{t+l-1}\}$, and $ \gamma(n,t)$ is nonincreasing in $t$ for each $n$ and $n\gamma(n,t_n)\to 0$ as $n\to \infty$ for some sequence $t_n=o(n)$, $t_n \to \infty$. 


\textbf{Condition} ($D'(u_n)$): The condition $D'(u_n)$ is said to hold for the stationary sequence $\{X_i\}$ and 
the sequence $\{u_n\}$ if 
\[
\limsup_{n \to \infty } n \sum_{j=1}^{[n/k]}\mu(X_0 > u_n, X_j>u_n) \to 0
\]
as $k \to \infty$.


While $D_3(u_n)$ is a condition on the long range dependence structure of the stochastic process $X_0, X_1, \cdots$, condition $D'(u_n)$ is a non-clustering condition, which states that if a large reading 
is observed(say the level $u_n$) at some time $j< n$, then one must wait for a large time $o(n) \to \infty$
 before another reading larger than or equal to $u_n$ is observed.
 
 \textbf{Assumption A}: For $\mu$ a.e. $p \in \Omega$ there exists $\tilde{d} = \tilde{d}(p) >0$ such that if $A_{r,\epsilon}(p)= \{y \in \Omega : r\le d(p, y) \le r+\epsilon\}$ is 
a shell of inner radius $r$ and outer radius $r+\epsilon$ about the point $p$, and if $r$ is sufficient small and $0<\epsilon \ll r < 1$, then $\mu(A_{r,\epsilon}) < \epsilon^{\tilde{d}}$.
 
 \begin{remark}\label{d3exp}
 For systems satisfying Assumption A, condition $D_3(u_n)$ often follows easily  if there are good enough estimates on  decay of correlation for observations in a suitable Banach space.
 \end{remark}

\
J. M. Freitas, N. Haydn, and M. Nicol \cite{FHN} states that the REPP $N_n$ converges in distribution to a standard Poisson process for functions maximized at 
generic points in a variety of billiard systems. They prove this by verifying that the   conditions $D_3(u_n)$ and $D'(u_n)$ hold for such systems. 

\subsection{Main results}
In section \ref{VolPres}, to introduce the main ideas of our analysis of  the Lorenz system in a simpler setting, we establish results of independent interest, namely shrinking target properties 
 for  general skew product maps which do  preserve the two-dimensional Lebesgue measure. We have the following theorem:
\begin{thm}\label{volumepreserving}
Suppose $(\Omega, \mathcal{B}, m_2)$ is a probability space, where $\Omega = I \times I$, with $I = [-\frac{1}{2}, \frac{1}{2}]$, and $m_2$ is the two-dimensional Lebesgue measure. Let $m$ denote the one-dimensional Lebesgue measure.
  Suppose 
$F :  \Omega \to \Omega$ is a map  in the form $F(x,y) = (T(x), G(x,y))$, where $T: I \to I$. Here $F$ 
preserves the two-dimensional Lebesgue measure and $T$ preserves the one-dimensional Lebesgue measure. In addition, 
$T$ satisfies exponential decay of correlation with observables in $BV$ norm versus $L^1$ norm, $i.e$.
\[
|\int \phi \psi \circ T^n d m - \int \phi d m \int \psi d m | \le C \theta^n ||\phi||_{BV} ||\psi||_1
\]
and $F$ satisfies exponential decay of correlation with observables in Lipschitz norm versus Lipschitz norm:
\[
 |\int \phi \psi \circ F^n d m_2 - \int \phi d m_2 \int \psi d m_2 | \le C \alpha^n ||\phi||_{Lip} ||\psi||_{Lip}
\]

Consider nested balls $(B_i(p))$, centered at some $p \in \Omega$, with 
$m_2(B_i(p)) \ge \frac{C}{i^{\gamma_1}}$ for some $\gamma_1 >0$, and $\limsup (\log i) (m_2(B_i))^{\frac{1}{2}} \le C$.
Then we have the Strong Borel Cantelli property
\[
 \frac{S_n(x,y)}{E_n} \to 1 ~a.s.
\]
where $S_n(x,y) = \sum_{j=0}^{n-1} 1_{B_j}\circ F^j(x,y)$, $E_n = \sum_{j=0}^{n-1}m_2(B_j)$. 
\end{thm}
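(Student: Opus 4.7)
The plan is to deduce the strong Borel--Cantelli conclusion from the Gal--Koksma / Sprindzuk variance lemma: it suffices to show that for every $M<N$
\[
\int \Bigl(\sum_{k=M}^{N-1}\bigl(\textbf{1}_{B_k}\circ F^{k}-m_2(B_k)\bigr)\Bigr)^{2} dm_2 \;\leq\; C\,(E_N-E_M)\,\Psi(N),
\]
with $\Psi$ slowly growing, because under the lower bound $m_2(B_i)\geq Ci^{-\gamma_1}$ the sequence $E_N$ grows at least polynomially and can absorb the $\sqrt{\Psi(N)}\,(\log E_N)^{3/2+\eta}$ overhead in Gal--Koksma. The whole problem therefore reduces to estimating this variance.

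Expanding the square and using $F$-invariance of $m_2$, the variance splits into a diagonal part bounded trivially by $E_N-E_M$ and an off-diagonal sum of the correlations
\[
\mathcal{C}_{i,j}\;:=\;\int \textbf{1}_{B_i}\cdot\textbf{1}_{B_j}\circ F^{j-i}\,dm_2 - m_2(B_i)\,m_2(B_j),\qquad M\leq i<j<N.
\]
Since $\textbf{1}_{B_i}$ is not Lipschitz, the Lip vs.\ Lip decay of correlations hypothesis cannot be applied directly. I would approximate $\textbf{1}_{B_i}$ by a Lipschitz bump $\phi_i^{\varepsilon}$ sandwiched between $\textbf{1}_{B_{r_i-\varepsilon}(p)}$ and $\textbf{1}_{B_{r_i+\varepsilon}(p)}$; because the $\varepsilon$-shell of a two-dimensional ball of radius $r$ has area at most $Cr\varepsilon$, one gets $\|\phi_i^{\varepsilon}\|_{\mathrm{Lip}}\leq C/\varepsilon$ and $\|\phi_i^{\varepsilon}-\textbf{1}_{B_i}\|_{L^{1}}\leq Cr_i\varepsilon$, with $r_i\sim m_2(B_i)^{1/2}$ the radius of $B_i$.

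For pairs $(i,j)$ with $k:=j-i$ in a short range $k\leq K$ I would use the trivial bound $|\mathcal{C}_{i,j}|\leq 2m_2(B_i)$, whose total contribution is at most $2K(E_N-E_M)$. For $k>K$ I would combine the approximation with the decay of correlations to obtain
\[
|\mathcal{C}_{i,j}|\;\leq\; C\,\alpha^{k}\varepsilon^{-2}+C(r_i+r_j)\varepsilon,
\]
and then optimize in $\varepsilon$, yielding $|\mathcal{C}_{i,j}|\leq C(r_i+r_j)^{2/3}\alpha^{k/3}$. The hypothesis $\limsup(\log i)\,m_2(B_i)^{1/2}\leq C$, which amounts to $r_i\lesssim 1/\log i$, is precisely what makes the resulting sum acceptable: choosing $K$ of order $\log N$ so that $\alpha^{K/3}\leq 1/N$, the long-range contribution is dominated by $\alpha^{K/3}\sum_{i\leq N}r_i^{2/3}\leq C/(\log N)^{2/3}$, which is negligible.

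Putting the short- and long-range pieces together gives a variance bound of the form $(E_N-E_M)\log N$, hence $\Psi(N)\sim\log N$, and Gal--Koksma then produces $S_N-E_N=O(\sqrt{E_N}(\log N)^{2+\eta})=o(E_N)$ thanks to the polynomial growth of $E_N$. The step I expect to be most delicate is the bookkeeping: simultaneously tuning the cutoff $K$, the approximation scale $\varepsilon=\varepsilon(k)$, and the Gal--Koksma logarithmic overhead so that the geometric factor $\alpha^{k/3}$ absorbs the large-$N$ prefactors arising from $\sum r_i^{2/3}$, and verifying that the condition $(\log i)\,m_2(B_i)^{1/2}\leq C$ is exactly the regularity on the family of balls needed to close the loop.
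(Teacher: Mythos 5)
Your overall framework (Gal--Koksma variance estimate, Lipschitz mollification of $\textbf{1}_{B_i}$, long-range cutoff at $K\sim\log N$ balanced against the $\alpha^{k}$ decay) matches the paper's. The genuine gap is in the short-range regime $1\le j-i\le K$, where you invoke the trivial bound $|\mathcal{C}_{i,j}|\le 2m_2(B_i)$ and pay a factor of $K\sim\log N$, concluding the argument ``thanks to the polynomial growth of $E_N$.'' That last step is not justified by the hypotheses: the lower bound $m_2(B_i)\ge C i^{-\gamma_1}$ allows $\gamma_1\ge 1$, so a family with $m_2(B_i)\sim 1/i$ (which also satisfies $(\log i)\,m_2(B_i)^{1/2}\le C$) gives $E_N\sim\log N$, and then your variance bound $\Theta(N)\lesssim E_N\log N$ fed into Gal--Koksma produces an error $\Theta(N)^{1/2}\log^{3/2+\epsilon}\Theta(N)\gtrsim (\log N)(\log\log N)^{3/2+\epsilon}$, which is not $o(E_N)$. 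Your argument therefore proves the conclusion only under an extra polynomial-growth assumption on $E_N$ that the theorem does not make.

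The missing idea is to exploit the skew-product structure in the short range instead of the 2D decay of correlations or a trivial bound. Since $F(x,y)=(T(x),G(x,y))$, if $(x,y)\in B_i$ and $F^{j-i}(x,y)\in B_j$, then $x\in\pi_X B_i$ and $T^{j-i}x\in\pi_X B_j$; Fubini in the fiber variable then gives
\[
m_2\bigl(B_i\cap F^{-(j-i)}B_j\bigr)\;\le\; C\,m_2(B_i)^{1/2}\cdot m\bigl(\pi_X B_i\cap T^{-(j-i)}\pi_X B_j\bigr),
\]
the factor $m_2(B_i)^{1/2}$ being the vertical extent of $B_i$. Now the one-dimensional $BV$ vs.\ $L^1$ decay of $T$ applies directly to characteristic functions (since $\|\textbf{1}_{\pi_X B_i}\|_{BV}$ is bounded) and yields $m(\pi_X B_i\cap T^{-k}\pi_X B_j)\le m_2(B_i)^{1/2}m_2(B_j)^{1/2}+C\theta^{k}m_2(B_j)^{1/2}$, hence
\[
E(f_if_j)\;\le\; C\,m_2(B_i)^{3/2}+C\theta^{j-i}m_2(B_i).
\]
The extra factor $m_2(B_i)^{1/2}$ is precisely what the hypothesis $\limsup(\log i)\,m_2(B_i)^{1/2}\le C$ is designed to absorb: summing over $j\le i+a\log i$ gives $C(\log i)\,m_2(B_i)^{3/2}+C m_2(B_i)\le C' m_2(B_i)$, so the variance bound becomes $\le C(E_N-E_M)$ with \emph{no} logarithmic loss, and Gal--Koksma then gives $o(E_N)$ whenever $E_N\to\infty$, however slowly. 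Without the projection step this hypothesis has nothing to act on in the short-range sum, which is exactly where you lose the $\log$ factor. Your long-range optimization $|\mathcal{C}_{i,j}|\lesssim (r_i+r_j)^{2/3}\alpha^{k/3}$ is fine, and is broadly equivalent to the paper's choice of a fixed mollification scale $\varepsilon_i=i^{-3\gamma_1}$ together with $a\ge -7\gamma_1/\log\alpha$.
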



Then in section \ref{Lorenz}, we establish the shrinking target property for the two-dimensional Lorenz map $F$ and Extreme Value Laws for the system with the Lorenz map. When we consider shrinking target balls, they have different shapes according to different metrics. Technically, balls of different shapes are equivalent to each other, but we are able to deal with rectangle balls and circle balls. While rectangle balls consist of local unstable manifold of same length, circle balls don't.  For rectangle balls, we prove the following theorem: 
\begin{thm}\label{BCLsquare}
Consider a sequence of nested square balls ${A_i}$, centered at a  point $p$, of side length $2r(i)$ 
such that $\mu(A_i) \ge \frac{C_2}{i^{\gamma_1}}$, with $ \gamma_1 \ge 0$. Assume that $p$ has a local product structure for 
sufficiently small neighborhoods. Also we assume $(\log i)m (\pi (A_i)_{\gamma})$ is bounded, where $\gamma$ is any local unstable manifold of $A_i$ and $\pi$ is the projection map onto the first dimension. Then if $\sum_i \mu (A_i)$ diverges we have
the Strong Borel Cantelli Property for the squares $A_i$ of sidelength $2r(i)$.
\end{thm}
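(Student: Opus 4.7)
The plan is to follow the template of Theorem~\ref{volumepreserving}, substituting the SRB measure $\mu$ for two-dimensional Lebesgue and using the local product structure at $p$ to import the one-dimensional short-return estimate of Gupta--Holland--Nicol. The basic reduction is the standard Gal--Koksma type criterion: it is enough to prove a variance bound
\[
\sum_{0\le j<k\le n}\bigl[\mu(F^{-j}A_j\cap F^{-k}A_k)-\mu(A_j)\mu(A_k)\bigr]\le C E_n(\log E_n)^{\kappa}.
\]
By $F$-invariance this amounts to controlling $\mu(A_j\cap F^{-m}A_k)-\mu(A_j)\mu(A_k)$ for $m=k-j\ge 1$, and I would split the analysis at the threshold $m_\ast(j):=(\log r(j))^5$.

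For the long-range regime $m>m_\ast(j)$ I would sandwich $\mathbf{1}_{A_j}$ and $\mathbf{1}_{A_k}$ between Lipschitz functions of Lipschitz constant $\epsilon_m^{-1}$, where $\epsilon_m$ is chosen so that $\alpha^m\epsilon_m^{-2}$ decays geometrically in $m$. The Lipschitz decay of correlations for $F$ then produces the main term $\mu(A_j)\mu(A_k)$ plus an error $C\alpha^m\epsilon_m^{-2}$, while the cost of the approximation is the $\mu$-measure of a shell of width $\epsilon_m$ around $\partial A_j$. Because $p$ has a local product structure and $\mu$ disintegrates absolutely continuously along unstable leaves with a BV conditional density, an Assumption A style bound $\mu(\mathrm{shell}_{\epsilon_m})\le C\epsilon_m^{\tilde d}$ holds for some $\tilde d>0$, and with a suitable choice of $\epsilon_m$ the total long-range contribution becomes summable.

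For the short-range regime $1\le m\le m_\ast(i)$ the key estimate is
\[
\mu(A_i\cap F^{-m}A_i)\le C\,\mu(A_i)\,\exp\bigl(-(\log r(i))^{\alpha'}\bigr)
\]
for some $\alpha'\in(0,1)$. I would prove this by disintegrating $\mu=\int\mu_\gamma\,d\hat\mu(\gamma)$ along local unstable manifolds through $p$. For each unstable leaf $\gamma$ meeting $A_i$, strong stable contraction of $F$ pushes the event $\{x\in A_i\cap\gamma:F^m x\in A_i\}$ into a set whose first-coordinate projection lies in $\pi(A_i\cap\gamma)\cap T^{-m}\pi(A_i\cap\gamma)$. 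Since $\pi(A_i\cap\gamma)$ is an interval of $m$-length at most $C/\log i$ by hypothesis, the Gupta--Holland--Nicol short-return estimate for the quotient map $T$, applied to this interval, yields exactly the gain $e^{-(\log r(i))^{\alpha'}}$ whenever $m\le(\log r(i))^5$. Integrating over $\hat\mu$ and using uniform bounds on the conditional densities transfers the gain to $\mu$, and since $m_\ast(i)\,e^{-(\log r(i))^{\alpha'}}\to 0$ the short-range contribution to the variance is $o(E_n)$.

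The main obstacle is the short-range step: the Gupta--Holland--Nicol estimate is stated for balls based at a fixed generic point of $T$, whereas here one needs it uniformly for the family of intervals $\pi(A_i\cap\gamma)$ inherited from the unstable foliation through $p$, whose centers vary slightly with the leaf $\gamma$. One must verify that the generic point $p$ can be chosen to lie above a $\pi(p)$ which is generic for the one-dimensional statement, and that the product structure identifies a full-$\hat\mu$-measure set of such leaves; this is where the local product structure hypothesis, together with the absolute continuity of the stable holonomy, enters decisively. Once this uniformity is in hand, assembling the long-range and short-range bounds and plugging them into the Gal--Koksma criterion yields the Strong Borel--Cantelli property for the squares $\{A_i\}$.
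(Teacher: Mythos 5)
Your proposal is correct in outline but takes a genuinely different, and in fact heavier, route than the paper's own proof of this theorem. The paper does \emph{not} invoke the Gupta--Holland--Nicol short-return estimate here at all. Instead, for the short-range block $1\le m\le a\log i$ it applies the one-dimensional $BV$-vs-$L^1$ decay of correlations for the quotient map $T$ directly to the fixed interval $\pi(A_i)_\gamma$, which --- crucially, because $A_i$ is a square --- is the \emph{same} interval on every leaf $\gamma$. This gives
\[
\mu(A_i\cap F^{-m}A_i)\;\le\; C\,m\bigl(\pi(A_i)_\gamma\bigr)\mu(A_i)\;+\;C'\theta^m\mu(A_i),
\]
and summing over $m\le a\log i$ produces $C(\log i)\,m(\pi(A_i)_\gamma)\mu(A_i)+C\mu(A_i)$, which is $O(\mu(A_i))$ exactly because of the hypothesis $(\log i)\,m(\pi(A_i)_\gamma)\le C$. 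The long-range regime is then handled by Lipschitz mollification and the $F$-decay of correlations, just as in your proposal and in Theorem~\ref{volumepreserving}. Notice that your argument never genuinely uses the hypothesis $(\log i)\,m(\pi(A_i)_\gamma)\le C$ --- your passing reference to it is incidental, since the GHN estimate requires no such smallness --- which is a signal that you are proving a stronger statement than the one at hand. What you describe (split at $(\log r)^5$, leaf-by-leaf short-return estimate, Hardy--Littlewood maximal function and Borel--Cantelli to make GHN uniform over leaves) is in fact essentially the paper's proof of Theorem~\ref{BCLcircle}, which must cope with $\pi(A_i)_\gamma$ varying across the unstable foliation because circular balls do not have this constancy. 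Your approach would go through once the uniformity obstacle you flag is resolved (as it is in the paper via the sets $F_{k,\gamma}$ and a maximal-function bound), but for squares it is unnecessary machinery: the extra hypothesis is there precisely to let the one-dimensional decay of correlations do the job on its own. The trade-off is that your route, if completed, also covers the general-ball case of Theorem~\ref{BCLcircle} without the auxiliary $(\log i)\,m(\pi(A_i)_\gamma)$ hypothesis, while the paper's route is shorter and more elementary but tailored to squares.
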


And for circle balls, we show:
\begin{thm}\label{BCLcircle}
 Consider a sequence of nested circle balls ${A_i}$,centered at a  point $p$, of radius $r(i) = e^{-u_i}$. And assume $\mu(A_i) \ge \frac{C_2}{i^{\gamma_1}}$, with $ \gamma_1 \ge 0$. Also assume that $p$ has a local product structure for 
sufficiently small neighborhoods. 
Then if $\sum_i \mu (A_i)$ diverges we have
the Strong Borel Property for the balls $A_i$.
\end{thm}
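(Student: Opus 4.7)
The plan is to mirror the proof of Theorem~\ref{BCLsquare} via a second-moment/Gal--Koksma argument, adapting the short-return and decay-of-correlations estimates to handle the fact that a disk does not decompose into local unstable leaves of a common length. Concretely, I want to establish
\[
\sum_{m \le j < k \le n}\bigl[\mu(A_j \cap F^{-(k-j)}A_k) - \mu(A_j)\mu(A_k)\bigr] = O\bigl(E_n (\log E_n)^{\tau}\bigr),
\]
from which $S_n/E_n\to 1$ almost surely follows by the standard Gal--Koksma criterion. I would split the double sum at a cutoff $(\log j)^{C}$ into ``long returns'' and ``short returns'' and control the two regimes by separate mechanisms.

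\textbf{Smoothing and long returns.} First I would replace $\mathbf{1}_{A_i}$ by a Lipschitz bump $\psi_i$ equal to $1$ on $A_i$ and vanishing outside an $\epsilon_i$-shell about $\partial A_i$, choosing $\epsilon_i$ to decay faster than any polynomial but more slowly than $\alpha^{(\log i)^{C}}$, where $\alpha$ is the decay rate of correlations for the Lorenz map. By Assumption~A, valid $\mu$-a.e.\ in view of the Afraimovich--Pesin local dimension result recalled in Section~\ref{localdim}, the discarded shell has measure $O(\epsilon_i^{\tilde d})$, making the substitution negligible. For long-return pairs $k-j \ge (\log j)^{C}$, I would invoke the exponential decay of correlations for Lipschitz observables on $F$ established in \cite{GP}, producing a bound of order $\alpha^{k-j}\|\psi_j\|_{Lip}\|\psi_k\|_{Lip}$, which is summable in $(j,k)$ by the choice of $\epsilon_i$.

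\textbf{Short returns, and the main obstacle.} The chief difficulty is bounding $\mu(A_j\cap F^{-(k-j)}A_k)$ in the regime $1 \le k-j \le (\log j)^{C}$, where correlation decay is too weak and disks are not cylinders over base intervals. I would circumscribe the disk $A_k$ by an axis-aligned square $A_k^{+}$ of side $2r_k$; by the local product structure from Section~\ref{localstr}, $A_k^{+}$ is a union of local unstable leaves of a common length whose projection $\pi(A_k^{+})$ onto the base is an interval of length $2r_k$. The one-dimensional short-return estimate of Gupta--Holland--Nicol~\cite{GHN} then gives
\[
m\bigl(\pi(A_k^{+})\cap T^{-(k-j)}\pi(A_k^{+})\bigr) \le m(\pi(A_k^{+}))\,e^{-(\log r_k)^{\alpha'}},
\]
which lifts via the local product structure to $\mu(A_j\cap F^{-(k-j)}A_k) \le C\mu(A_k)\,e^{-(\log r_k)^{\alpha'}}$, using that $\mu(A_k^{+})$ is comparable to $\mu(A_k)$ up to a factor depending only on the local dimension. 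Summing over the short-return window of length $(\log j)^{C}$ yields a total contribution of smaller order than $E_n$, and combining with the long-return bound completes the Gal--Koksma estimate. The whole argument thus reduces the disk case to the square case already treated in Theorem~\ref{BCLsquare}, with the disk-to-square comparison in the short-return step being the only genuinely new ingredient.
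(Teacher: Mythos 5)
Your overall scaffolding (Gal--Koksma, short/long-return split at $(\log i)^C$, Lipschitz smoothing plus exponential decay of correlations for the long-return tail) matches the paper's, and the long-return analysis is fine. The gap is in the short-return step, specifically in the disk-to-square comparison, and it is not cosmetic.

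You bound $\mu(A_j\cap F^{-(k-j)}A_j)$ by the corresponding quantity for the circumscribed square $A_j^{+}$ and then want to come back down via $\mu(A_j^{+})\le C\mu(A_j)$ with $C$ ``depending only on the local dimension.'' But the Afraimovich--Pesin local dimension result gives only a logarithmic comparison: for every $\epsilon>0$ and $r$ small, $r^{d+\epsilon}\le\mu(B_r(p))\le r^{d-\epsilon}$. From this, the best one can say is $\mu(A_j^{+})/\mu(A_j)\le \mu(B_{\sqrt2 r_j})/\mu(B_{r_j})\le C_\epsilon\,r_j^{-2\epsilon}$, and this loss is unavoidable: the transverse conditional measure $\nu$ on the leaf space need not have a bounded density, so leaves near the top and bottom of the disk, where the chord length $2\sqrt{r_j^2-y^2}$ vanishes, can carry a disproportionate share of $\nu$-mass, driving the ratio $\mu(A_j^{+})/\mu(A_j)$ to infinity as $r_j\to0$. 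Meanwhile, the gain from the Gupta--Holland--Nicol short-return estimate is only $e^{-|\log r_j|^{\alpha'}}$ with $\alpha'<1$, which is \emph{sub-polynomial} in $r_j$: writing $u_j=-\log r_j$, your bound produces a factor $e^{2\epsilon u_j - u_j^{\alpha'}}$, which tends to $+\infty$. So the circumscribing step does not close; the polynomial loss swallows the stretched-exponential gain.

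The paper's proof handles exactly this difficulty by never comparing the disk to a square. Instead it decomposes the leaf space into $\Gamma_1$ (leaves whose chord in the disk has length $<r_i^3$) and $\Gamma_2$ (the rest). On $\Gamma_1$, the entire conditional contribution is crudely bounded by $r_i^3\le\mu(A_i)^{3/(d+\eta)}$, which is a genuinely small multiple of $\mu(A_i)$ because $3/(d+\eta)>1$ for the two-dimensional attractor. On $\Gamma_2$, the short-return estimate~\eqref{shortreturn} is applied \emph{leaf by leaf} to the actual chord $\pi(A_i)_\gamma$, so one never needs to compare measures of different-shaped sets, and the factor $e^{-u_i^{\rho}}$ multiplies $\mu(A_i)$ directly. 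If you want to salvage your outline, replace the circumscribing step by this $\Gamma_1/\Gamma_2$ split: it is the one ingredient that makes the disk case genuinely different from, rather than reducible to, the square case.
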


We use techniques from subsection \ref{ExValueL}(About EVL) to prove Theorem \ref{BCLcircle}, therefore we will prove it after we establish Extreme Value Laws for the system with the Lorenz map $F$. We also show: 

\begin{thm}\label{EVL}
Consider the dynamical systems $(\Omega, \mathcal{B}, \mu, F)$, where $F$ is the Lorenz map and $F$ preserves the measure $\mu$, whose decomposition on the unstable leaves is absolutely continuous with respect to the one-dimensional Lebesgue measure.  If $\varphi$ is defined as \eqref{observable}, and let $X_n = \varphi \circ F^n$. We assume $x_0$ is not periodic under F. Then $X_n$ satisfies a Type I extreme value law, i.e.
\[
 \lim_{n\to \infty}\mu(M_n \le u_n) = e^{-e^ {-v}}
\]
\end{thm}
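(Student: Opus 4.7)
My plan is to derive the Type I extreme value law by verifying the two abstract conditions $D_3(u_n)$ and $D'(u_n)$ of Freitas--Haydn--Nicol \cite{FHN} for the stationary process $X_n=\varphi\circ F^n$, where $\varphi(x)=-\log d(x,x_0)$. Together with the normalization $n\mu(\{X_0>u_n\})\to e^{-v}$, these two conditions imply the convergence $\mu(M_n\le u_n)\to e^{-e^{-v}}$. The levels $u_n$ are determined by setting $r_n:=e^{-u_n}$ with $n\mu(B_{r_n}(x_0))\to e^{-v}$; since the local dimension of $\mu$ is a.e.\ constant by Afraimovich--Pesin, $r_n$ decays at a polynomial rate in $n$.

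For $D_3(u_n)$ I follow Remark \ref{d3exp}. Exponential decay of correlations of $F$ against Lipschitz observables is due to Galatolo--Pacifico \cite{GP}, and Assumption~A holds at $\mu$-a.e.\ point because the local product structure of \cite{AMV} together with absolute continuity of the disintegration of $\mu$ along unstable leaves bounds the measure of an $\epsilon$-shell by $O(\epsilon^{\tilde d})$. I would smooth the indicators $\mathbf{1}_{B_{r_n}(x_0)}$ and $\mathbf{1}_{\{M_\ell\le u_n\}}$ to Lipschitz functions that agree with them off an $\epsilon$-shell and have Lipschitz norm $O(1/\epsilon)$. Applying the Lipschitz decay-of-correlations bound gives a correlation error of order $C\alpha^t/\epsilon^2+C\epsilon^{\tilde d}$, and optimizing $\epsilon$ in $t$ yields $\gamma(n,t)=O(\alpha^{ct})$ for some $c>0$. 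Then $n\gamma(n,t_n)\to 0$ holds easily for $t_n=\lceil(\log n)^2\rceil$.

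For $D'(u_n)$ I split the inner sum at the threshold $j^*=\lfloor(\log r_n)^5\rfloor$. The contribution from $j>j^*$ is controlled by the same Lipschitz-approximation plus decay-of-correlations argument used for $D_3$; together with the trivial estimate $n\cdot[n/k]\cdot\mu(B_{r_n})^2=O(1/k)$, this tail tends to zero as $k\to\infty$. For $1\le j\le j^*$ I exploit the skew-product form $F(x,y)=(T(x),G(x,y))$: a pair $(x,y)\in B_{r_n}\cap F^{-j}B_{r_n}$ projects to a point of $\pi(B_{r_n})\cap T^{-j}\pi(B_{r_n})$ in the first coordinate. Using the local product structure at $x_0$, the conditional densities of $\mu$ along stable fibres near $x_0$ are uniformly bounded and comparable to each other, so
\[
\mu\bigl(B_{r_n}\cap F^{-j}B_{r_n}\bigr)\le C\,\frac{\mu(B_{r_n})}{\bar\mu(\pi(B_{r_n}))}\,\bar{\mu}\bigl(\pi(B_{r_n})\cap T^{-j}\pi(B_{r_n})\bigr),
\]
where $\bar\mu=\pi_{*}\mu$. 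Applying the one-dimensional short-return estimate of Gupta--Holland--Nicol \cite{GHN} to the expanding map $T$ bounds the right-hand side by $C\mu(B_{r_n})e^{-(\log r_n)^{\alpha}}$ for $\mu$-a.e.\ $x_0$. Summing over $j\le j^*$ and using $n\mu(B_{r_n})=O(1)$ gives a short-range contribution of order $j^*e^{-(\log r_n)^{\alpha}}\to 0$. Non-periodicity of $x_0$ is invoked here to exclude atoms in the short-return distribution and to apply the GHN estimate uniformly down to $j=1$.

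The main obstacle is the short-range step of $D'(u_n)$: transferring the one-dimensional estimate of \cite{GHN} to the two-dimensional Lorenz map requires a quantitative relation between $\mu(B_{r_n})$, the pushforward $\bar\mu(\pi(B_{r_n}))$, and the conditional densities along stable fibres, with constants uniform in $n$ and in $j\le j^{*}$. The local product structure of \cite{AMV} supplies the required disintegration, but the non-product geometry of the metric ball $B_{r_n}(x_0)$, the anisotropic contraction/expansion of $F$, and the discontinuity of the quotient map $T$ at $0$ will require some case analysis near singular fibres.
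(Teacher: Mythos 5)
Your overall strategy matches the paper's: verify $D_3(u_n)$ via Lipschitz approximation plus decay of correlations and Assumption~A, and verify $D'(u_n)$ by splitting at a $(\log n)^5$ threshold, handling the tail with correlations and the short range by projecting to the quotient map $T$. Your $D_3$ argument and the long-range part of $D'$ are sound and essentially identical to the paper's (the paper uses $t_n=(\log n)^5$ rather than $(\log n)^2$, but that is immaterial).

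The short-range step of $D'$, however, has a genuine gap that you half-acknowledge but do not close. The inequality
\[
\mu\bigl(B_{r_n}\cap F^{-j}B_{r_n}\bigr)\le C\,\frac{\mu(B_{r_n})}{\bar\mu(\pi(B_{r_n}))}\,\bar\mu\bigl(\pi(B_{r_n})\cap T^{-j}\pi(B_{r_n})\bigr)
\]
with a single uniform constant $C$ is false for a Euclidean ball $B_{r_n}(x_0)$: the cross-sections $\pi(B_{r_n})_\gamma$ of the ball with the unstable leaves $\gamma$ have one-dimensional length $q_\gamma r_n$ with $q_\gamma$ ranging from $1$ down to $0$ near the top and bottom of the ball, so there is no uniform comparison between $\mu(B_{r_n})/\bar\mu(\pi(B_{r_n}))$ and the fiber densities, and no single radius at which to invoke the Gupta--Holland--Nicol short-return bound. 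The paper resolves exactly this difficulty by disintegrating $\mu(U_n\cap F^{-j}U_n)\le\int_\Gamma m(\pi(U_n)_\gamma\cap T^{-j}\pi(U_n)_\gamma)\,d\nu(\gamma)$ along leaves and then splitting $\Gamma=\Gamma_1\cup\Gamma_2$, where $\Gamma_1$ consists of leaves whose cross-section has length $<r_n^3$: the $\Gamma_1$-contribution is bounded crudely by $r_n^3=e^{-3u_n}$, and $n(\log n)^5 e^{-3u_n}\to 0$ since the local dimension $d<2$ forces $3u_n>\log n$. On $\Gamma_2$ the paper does not apply the GHN single-radius estimate directly but re-derives a short-return bound valid \emph{simultaneously for all sub-radii $q_\gamma r_n$} via a Hardy--Littlewood maximal-function argument (the sets $E_{k,\gamma}$, $F_{k,\gamma}$ and Lemma~\ref{annulus}), together with Borel--Cantelli to obtain a full-measure set of base points. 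You should either adopt this $\Gamma_1/\Gamma_2$ decomposition, or restrict to square neighborhoods (as in Theorem~\ref{BCLsquare}), where all cross-sections have the same length $2r_n$ and your uniform ratio argument would go through.
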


In section \ref{flow}, we extend our results to the Lorenz flow.

\section{Volume Preserving Skew Products}\label{VolPres}
As we mentioned in the introduction, the Lorenz map $F$ has a skew product form $F(x,y)=(T(x), G(x,y))$. And $F$  does not preserve the two-dimensional Lebesgue measure  $m_2$ but 
the one-dimensional map  $T$ 
preserves a measure absolutely continuous with respect to  the one-dimensional 
Lebesgue measure $m$, with a Lipschitz density.  In this section, we talk about general skew product maps $\tilde{F}(x,y)=(T(x), G(x,y))$, which do preserve the two-dimensional Lebesgue measure and T does preserve the one-dimensional Lebesgue measure. We stated our Theorem \ref{volumepreserving} in the introduction, now let's prove it.

\begin{proof}[{\bf Proof of Theorem \ref{volumepreserving}}]\mbox{}\\* 
Let $f_k = \textbf{1}_{B_k} \circ F^k(x,y)$, $ E(f_k) = m_2(B_k)$ and let  $a\ge \frac{-7\gamma_1}{\log \alpha}$; we will use $a$ later.  To prove Strong Borel Cantelli Property, 
according to \cite{HNVZ},  it suffices to show the (SP) property, i.e. for all $m<n$
\[
\sum_{i=m}^n \sum_{j=i+1}^n (E(f_i f_j)-E(f_i)E(f_j)) \le C \sum_{i=m}^n E(f_i)
\]

We calculate
\begin{eqnarray*}
E(f_i f_j)&=& \int \textbf{1}_{B_i} \circ F^i(x,y) \cdot \textbf{1}_{B_j} \circ F^j(x,y) d m_2\\
         &=& \int \textbf{1}_{B_i} \cdot \textbf{1}_{B_j} \circ F^{j-i}(x,y) d m_2\\
      &=& m_2 (B_i \cap F^{-(j-i)} B_j)\\
       &\le& C_1 (m_2(B_i))^{\frac{1}{2}} \cdot m(\pi_X B_i \cap T^{-(j-i)} \pi_X B_j)\footnotemark\\
      &= & C_1 (m_2(B_i))^{\frac{1}{2}} \cdot \int  \textbf{1}_{\pi_X B_i} \cdot \textbf{1}_{\pi_X B_j} \circ T^{j-i}(x) d m\\
     &\le& C_1 (m_2(B_i))^{\frac{1}{2}} \cdot (\int\textbf{1}_{\pi_X B_i} \int \textbf{1}_{\pi_X B_j} + C \theta^{j-i} ||\textbf{1}_{\pi_X B_i}||_{BV} ||\textbf{1}_{\pi_X B_j}||_1 )\\
       &\le&  C_1 (m_2(B_i))^{\frac{1}{2}}\cdot \left[  (m_2(B_i))^{\frac{1}{2}} \cdot  (m_2(B_j))^{\frac{1}{2}} + C \theta ^{j-i}  (m_2(B_j))^{\frac{1}{2}}\right]\\
       &\le& C_1 (m_2(B_i))^{\frac{3}{2}} + C \theta ^{j-i} m_2(B_i)
\end{eqnarray*}
\footnotetext{Here $m_2 (B_i \cap F^{-(j-i)} B_j) \le C_1 (m(B_i))^{\frac{1}{2}} \cdot m(\pi_X B_i \cap T^{-(j-i)} \pi_X B_j) $ is because that if $\tilde{x}=(x,y) \in B_i$ and 
$F^{j-i}(\tilde{x}) \in B_j$, then their projection $x \in \pi_X B_i$ and $T^{j-i}(x) \in \pi_X B_j$ so $m_2(B_i \cap F^{-(j-i)} B_j) = \int m((B_i \cap F^{-(j-i)} B_j)_y)dm(y)
=\int m(\{ \tilde{x}= (x,y) | \tilde{x} \in B_i, F^{j-i}(\tilde{x}) \in B_j\})dm(y) \le \int m(x \in \pi_X B_i, T^{j-i}(x) \in \pi_X B_j) m(y)$, where  $\pi_X B_i$ is the projection of the maximal horizontal section of 
the ball $B_i$.}

since $\int\textbf{1}_{\pi_X B_i}dm = m(\pi_X B_i) = (m_2(B_i))^{\frac{1}{2}}$.

Recall $a\ge \frac{-7\gamma_1}{\log \alpha}$, so that
\begin{eqnarray*}
 & &\sum_{j=i+1}^n (E(f_i f_j)-E(f_i)E(f_j))\\
  &\le& ( \sum_{j=i+1}^{i+a\log{i}} + \sum_{j>i + a\log{i}}) [E(f_i f_j)-E(f_i)E(f_j)] \\
&\le& C_1 (\log i) (m_2(B_i))^{\frac{3}{2}}  + C  m_2(B_i) +  \sum_{j>i + a\log{i}}  C \alpha^{j-i} ||\tilde{f}_i||_{Lip} ||\tilde{f}_j||_{Lip} + O(\frac{1}{i^{7/2}}) \\
\end{eqnarray*}
where $\tilde{f}_i$ is a Lipschitz approximation to $f_i$, which is constructed as following: $\tilde{f_i} = \textbf{1}_{B_i}(x)$ if $x \in B_i$, $\tilde{f_i} = 0$ if $d(B_i, x) > 1/i^{3\gamma_1}$, 
$0\le \tilde{f_i}\le 1$ and $||\tilde{f_i}||_{Lip} \le i^{3\gamma_1}$.

Then
\begin{eqnarray*}
 \sum_{j>i + a\log{i}}   \alpha^{j-i} ||\tilde{f}_i||_{Lip} ||\tilde{f}_j||_{Lip} &\le& \sum_{j>i + a\log{i}}  \alpha^{j-i} i^{3\gamma_1} j^{3\gamma_1}\\
                                                                                                                       &=& \sum_{\beta =1}^{\infty} \alpha^{a\log{i}+\beta} i^{3\gamma_1} (i + a\log{i} + \beta)^{3\gamma_1}\\
                                                                                                                       &\le& \alpha^{a\log{i}} i^{3\gamma} C i^{3\gamma_1}\\
                                                                                                                       &\le& \frac{C}{i^{\gamma_1}}\le C m_2(B_i)
\end{eqnarray*}

Since $(\log i) (m_2(B_i))^{\frac{1}{2}} \le C$,  the (SP) property is satisfied.
\end{proof}

\begin{remark}
 (SP) property (Sprindzuk Property) is derived from Gal-Koksma theorem, which is given in the Appendix. Once we have the (SP) property, then the Strong Borel Cantelli Property is established because in the Gal-Koksma theorem, if we take 
 $f_k(\omega) = \textbf{1}_{B_k} \circ F^k(x,y)$, $h_k=g_k = E(f_k) = m_2(B_k)$, dividing both sides of the equation by $\sum_{k=1}^n g_k$, we will have  $\frac{S_n(x,y)}{E_n} \rightarrow 1$ $a.s.$.
\end{remark}



 \section{Lorenz System}\label{Lorenz}
 
In this section,  we present our results on the statistical properties of the Lorenz system, i.e.  Borel Cantelli Lemma and Extreme Value Laws.  Recall that the Lorenz map $F$ does not preserve the two-dimensional Lebesgue measure $m_2$, but preserves an invariant measure $\mu$ which has 
 absolutely continuous conditional measures on local unstable manifolds.
 
 \subsection{Borel Cantelli Lemma}
 
 We let $A_r ( p ) $ denote the square of sidelength $2r$ centered at a point $p$ in the two dimensional space $I\times I$. 
   As a consequence of~\cite[Proposition 2.4]{AMV}, for $\mu$ a.e. $p$, there exists an $r ( p )  >0 $ such  that
 for all $r<r ( p )$,  $ A_r ( p )$ has a local product structure  and in particular $\mu$ a.e. $q\in A_r ( p ) $ has a local unstable manifold $\gamma (q) := W^u_{loc} (q)$ 
 which extends fully across $A_r ( p )$.  The local stable manifolds are arbitrarily long for $\mu$ a.e. $q$. The set of local unstable manifolds $\Gamma=\{\gamma (q)\}$
 partition $A_r (p)$ up to a set of zero $\mu$ measure i.e. $\mu (A_r (p) )=\mu (\cup_{q\in A_r (p)} \gamma (q) \cap A_r (p))$. We will drop the dependence on $q$ and write
 $\Gamma=\{\gamma \}$ for simplicity.
 
 Before we prove Theorem \ref{BCLsquare}, we introduce the notation. 
 
 Given such a point $p$ we let $A$ be a square based at $p$, with sidelength  smaller than $2r(p)$.
 
 Let  $A_{\gamma}= A \cap \gamma$ for $\gamma \in \Gamma$,
 \[
  \mu(A) = \int_I m_{\gamma} (A_{\gamma}) d\nu(\gamma )
 \]
 where $m_{\gamma}$ is the induced  measure of $\mu$ on $\gamma$ and $\nu$ is conditional measure in the decomposition of $\mu$ with respect to  the partition $\{ \gamma \}$.
 
 Let $\pi$ be the projection map onto the first dimension and note that $m_{\gamma} ( A_\gamma) \sim m(\pi A) $.  

 For the Lorenz map $F$ (see \cite[ Theorem 4.7]{GP}), we have exponential decay in Lipschitz versus Lipschitz 
\[
 |\int \phi \psi \circ F^n d \mu - \int \phi d \mu \int \psi d \mu | \le C \alpha^n ||\phi||_{Lip} ||\psi||_{Lip}
\]
and for the base map $T$ (see \cite[Propostion 2.2]{GP} ), we have exponential decay in $L^1$ versus $BV$
\[
|\int \phi \psi \circ T^n d m - \int \phi d m \int \psi d m | \le C \theta^n ||\phi||_{BV} ||\psi||_1
\]
By taking $\phi = \psi = \textbf{1}_{\pi A}= \textbf{1}_{\pi A_{\gamma}}$, we have 
\[
 m(\pi A_{\gamma} \cap T^{-n} (\pi A_{\gamma}) ) - (m (\pi A_{\gamma}))^2 \le C \theta^n ||\textbf{1}_{\pi A_{\gamma}}||_{BV} ||\textbf{1}_{\pi A_{\gamma}}||_1
\]
That is
\[
  m(\pi A_{\gamma} \cap T^{-n} (\pi A_{\gamma}) ) \le (m (\pi A_{\gamma}))^2 + C' \theta^n m(\pi A_{\gamma})
\]
since $||\textbf{1}_{\pi A_{\gamma}}||_{BV}$ is bounded.
 
\begin{proof}[{\bf Proof of Theorem \ref{BCLsquare}}]\mbox{}\\*
 We will establish (SP) property. Without loss of generality, we assume $i < j$. We notice $m(\pi (A_i)_\gamma)$ is equal for all $\gamma \in \Gamma$ since they are square balls. Thus,
\begin{eqnarray*}
 & &\mu(A_i \cap F^{-(j-i)}A_j) \le \mu(A_i \cap F^{-(j-i)} A_i)\\ &\sim& \int_I m(\tilde{x} \in (A_i)_{\gamma}: F^{j-i}(\tilde{x}) \in A_i ) d\nu(\gamma) \\
  &\le& \int_I m(\pi (A_i)_{\gamma} \cap T^{-(j-i)} (\pi (A_i)_{\gamma})) d\nu(\gamma)\\
  &\le& \int_I (m (\pi (A_i)_{\gamma}))^2 + C' \theta^{(j-i)} m(\pi (A_i)_{\gamma}) d\nu(\gamma)\\
  &=& \int_I (m (\pi (A_i)_{\gamma}))^2 d\nu(\gamma) + C' \theta^{(j-i)} \int_Im(\pi (A_i)_\gamma) d\nu(\gamma)\\
  &=& \int_I (m (\pi (A_i)_{\gamma}))^2 d\nu(\gamma) + C' \theta^{(j-i)} \mu(A_i) \\
  &\le& C m (\pi (A_i)_{\gamma})\int_I m(\pi (A_i)_{\gamma}) d\nu(\gamma)+ C' \theta^{(j-i)} \mu(A_i)\\
  &=&C m (\pi (A_i)_{\gamma})\mu(A_i)+C' \theta^{(j-i)} \mu(A_i)\label{sec}
\end{eqnarray*}



Since $\textbf{1}_{A_n}$ is not Lipschitz, we let $\phi_n$ be a  Lipschitz approximation of $\textbf{1}_{A_n}$ such that
\begin{enumerate}
 \item $||\textbf{1}_{A_n}- \phi_n||_1 < (\mu(A_n))^3$
 \item $||\phi_n||_{Lip} < (\mu(A_n))^{-3}$
\end{enumerate}
Then
\begin{eqnarray*}
 & &\left| \int \textbf{1}_{A_i} \cdot \textbf{1}_{A_j} \circ F^{j-i} d \mu-\int \textbf{1}_{A_i} d\mu \int \textbf{1}_{A_j}d\mu\right|\\
 &=&\left| \int ([\textbf{1}_{A_i}-\phi_i]+\phi_i )\cdot ([\textbf{1}_{A_j}-\phi_j]+\phi_j) \circ F^{j-i} d \mu-\int ([\textbf{1}_{A_i}-\phi_i]+\phi_i) d\mu \int ([\textbf{1}_{A_j}-\phi_j]+\phi_j)d\mu\right|\\
 &\le&\left| \int \phi_i \phi_j \circ F^{j-i} d\mu - \int \phi_i d\mu \int \phi_jd\mu \right|+\left| \int (\textbf{1}_{A_i}-\phi_i)(\textbf{1}_{A_j}-\phi_j)\circ F^{j-i}d\mu \right|\\
 & & + \left| \int \phi_i (\textbf{1}_{A_j}-\phi_j)\circ F^{j-i} d\mu\right| +\left| \int(\textbf{1}_{A_i}-\phi_i)\phi_j \circ F^{j-i} d\mu\right| \\
 & & + \left| \int(\textbf{1}_{A_i}-\phi_i)d\mu \int(\textbf{1}_{A_j}-\phi_j)d\mu\right| + \left| \int \phi_i d\mu \int (\textbf{1}_{A_j}-\phi_j)d\mu\right|\\
 & & + \left| \int(\textbf{1}_{A_i}-\phi_i)d\mu \int \phi_j  d\mu\right|\\
 &\le& C \alpha^{j-i}||\phi_i||_{Lip}||\phi_j||_{Lip}+ \tilde{C}(\mu(A_i))^3\\
\end{eqnarray*}


If we choose $a \ge \frac{-7\gamma_1}{\log{\alpha}}$, then
\begin{eqnarray*}
 \sum_{j>i + a\log{i}}   \alpha^{j-i} ||\phi_i||_{Lip} ||\phi_j||_{Lip} &\le& \sum_{j>i + a\log{i}}  \alpha^{j-i} i^{3\gamma_1} j^{3\gamma_1}\\
                                                                                                                       &=& \sum_{\beta =1}^{\infty} \alpha^{a\log{i}+\beta} i^{3\gamma_1} (i + a\log{i} + \beta)^{3\gamma_1}\\
                                                                                                                       &\le& \alpha^{a\log{i}} i^{3\gamma_1} C i^{3 \gamma_1}\\
                                                                                                                       &\le& \frac{C}{i^{\gamma_1}}\\
                                                                                                                       &\le& C \mu(A_i)
\end{eqnarray*}

Thus, 
 let  $f_k = \textbf{1}_{A_k} \circ F^k(x,y)$, $E(f_k) = \mu(A_k)$, so that we have
\begin{eqnarray*}
 & & \sum_{j=i+1}^n (E(f_i f_j)-E(f_i)E(f_j)) \\
 &=& \sum_{j=i+1}^n \mu(A_i \cap F^{j-i} A_j) - \mu(A_i)\mu(A_j)\\
 &=&  \sum_{j=i+1}^{i+a \log i} \left[ \mu(A_i \cap F^{j-i} A_j) - \mu(A_i)\mu(A_j)\right]+  \sum_{j>i+a\log i} \left[ \mu(A_i \cap F^{j-i} A_j) - \mu(A_i)\mu(A_j)\right]\\\\
 &\le& \sum_{j=i+1}^{i+a\log i}  \left[m (\pi (A_i)_{\gamma})\mu(A_i)+C' \theta^{j-i} \mu(A_i)\right]+ C\mu(A_i)\\
 &\le&  C (\log i)m (\pi (A_i)_{\gamma})\mu(A_i)+ C_1 \mu(A_i)+ C_2\mu(A_i)\le \tilde{C} \mu(A_i)
\end{eqnarray*}
 We have established the  (SP) property and thus the strong Borel Cantelli lemma for $\{A_i\}$.
 
\end{proof}

For more general case, i.e. circle balls, we have Theorem \ref{BCLcircle}. As we mentioned in the introduction,  the proof of Theorem \ref{BCLcircle} needs the techniques from the subsection on EVL so we will do the proof then. 


\subsection{Extreme Value Laws}\label{ExValueL}

In this section, we establish EVL  for Lorenz maps by essentially showing that the two conditions $D_3(u_n)$ and $D'(u_n)$, which were introduced in section \ref{evl}, are satisfied. Recall:

\textbf{Condition} $(D_3(u_n))$. We say that $D_3(u_n)$ holds for the sequence $X_0, X_1, X_2, \cdots$ if for any integers $l$, $t$ and $n$ 
\[
\bigg\vert\mu(\{X_0 > u_n\} \cap \{M_{t,l}\le u_n\})-\mu(\{X_0 > u_n\})\mu(\{M_l\le u_n\})\bigg\vert \le \gamma(n,t)
\]
where $ M_{t, l}= \max \{X_t, X_{t+1}, \cdots, X_{t+l-1}\}$, and $ \gamma(n,t)$ is nonincreasing in $t$ for each $n$ and $n\gamma(n,t_n)\to 0$ as $n\to \infty$ for some sequence $t_n=o(n)$. 

\vspace{.5cm}

\textbf{Condition} ($D'(u_n)$): The condition $D'(u_n)$ is said to hold for the stationary sequence $\{X_i\}$ and 
the sequence $\{u_n\}$ if 
\[
\limsup_{n \to \infty } n \sum_{j=1}^{[n/k]}\mu(X_0 > u_n, X_j>u_n) \to 0
\]
as $k \to \infty$.

\vspace{.5cm}

Before we prove Theorem \ref{EVL}, let's prove the following two lemmas:
\begin{lemma}\label{AssumptionA}
 Suppose we have a local product structure about a point $x_0$ and the local dimension exists, denoted by $d$. Then  Assumption A is satisfied.
\end{lemma}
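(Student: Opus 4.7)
The plan is to use the local product structure to disintegrate $\mu$ in a neighbourhood of $x_0$ along the unstable foliation, and then to bound the shell $A_{r,\epsilon}(x_0)$ by a Fubini-type argument that separates the unstable and stable contributions.

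First, the local product structure lets us introduce coordinates $(u,s)$ near $x_0$ adapted to the local unstable and stable foliations, in which $\mu$ disintegrates as $\mu=\int m_\gamma\,d\nu(\gamma)$; the conditional measures $m_\gamma$ on local unstable leaves are absolutely continuous with respect to one-dimensional Lebesgue measure with bounded density, and $\nu$ is the transverse measure on the stable transversal. In sup metric the ball factorises as $B_r(x_0)\approx I^u_r\times I^s_r$, yielding $\mu(B_r(x_0))\asymp 2r\cdot \nu(I^s_r)$. Combining with the local-dimension relation $\mu(B_r(x_0))\sim r^d$ gives $\nu(I^s_r)\sim r^{d-1}$, so the transverse measure has positive local dimension $d_s:=d-1$ at the stable projection of $x_0$.

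Next, decompose the shell as $A_{r,\epsilon}=A^u\cup A^s$, where $A^u$ consists of the points with $r<|u-u_0|\le r+\epsilon$ (and $|s-s_0|\le r+\epsilon$), while $A^s$ is the set with $|u-u_0|\le r$ and $r<|s-s_0|\le r+\epsilon$. The disintegration and bounded density show that each unstable slice of $A^u$ has one-dimensional Lebesgue length at most $2\epsilon$, so $\mu(A^u)\lesssim \epsilon\cdot \nu(I^s_{r+\epsilon})\lesssim \epsilon\cdot r^{d-1}\le \epsilon$. For $A^s$ the unstable slice has length at most $2r$ and the transverse factor reduces to $\nu(I^s_{r+\epsilon}\setminus I^s_r)$, which consists of two one-dimensional intervals of length $\epsilon$ at distance $r$ from $s_0$. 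An upper Ahlfors-type regularity of $\nu$, extracted from $\nu(I^s_\rho)\sim \rho^{d_s}$ together with the Gibbs/Markov structure of the transverse measure in the Lorenz setting, gives $\nu(I^s_{r+\epsilon}\setminus I^s_r)\lesssim \epsilon^{d_s-\delta}$ for any fixed small $\delta>0$. Hence $\mu(A^s)\lesssim r\cdot \epsilon^{d_s-\delta}$, and combining with the $A^u$ estimate yields $\mu(A_{r,\epsilon})\lesssim \epsilon^{\min(1,\,d_s-\delta)}$, so any $\tilde d<\min(1,d_s)$ works for $\epsilon$ sufficiently small.

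The hard part is the last ingredient: transferring the pointwise relation $\nu(I^s_\rho)\sim \rho^{d_s}$ at the single point $s_0$ to a uniform upper bound on $\nu$ of a short interval located at distance $r>0$ from $s_0$. In the Lorenz context this follows from the Gibbs structure of $\nu$, whose log-density is H\"older along the stable direction, but abstractly one would either add a mild upper-regularity hypothesis on the transverse measure or run an Egorov-type argument to secure the bound on a $\mu$-full-measure set of base points, which is where the genericity of $x_0$ really enters.
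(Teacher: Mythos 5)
Your route is genuinely different from the paper's, and it contains a real gap that you yourself flag but do not close.

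You work in the sup metric, so the shell $A_{r,\epsilon}$ has a ``corner'': a leaf $\gamma$ whose transverse distance $c$ from $s_0$ lies in $(r,r+\epsilon]$ meets the shell in a segment of length $\approx 2r$, not $O(\epsilon)$. To control the contribution of those leaves you must bound $\nu\bigl(I^s_{r+\epsilon}\setminus I^s_r\bigr)$, the transverse mass of two short intervals of length $\epsilon$ sitting at distance $r$ from $s_0$. The pointwise local-dimension relation $\nu(I^s_\rho)\sim\rho^{d_s}$ at $s_0$ gives you no control over such off-center intervals; you would need an upper Ahlfors regularity of $\nu$ that is not part of the lemma's hypotheses (only existence of the local dimension and a local product structure are assumed). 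Appealing to a ``Gibbs/Markov structure of the transverse measure'' imports an unstated assumption, and in the Lorenz setting the singularity at $0$ makes even that delicate. So the step ``$\nu(I^s_{r+\epsilon}\setminus I^s_r)\lesssim\epsilon^{d_s-\delta}$'' is where the proof would stall.

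The paper sidesteps this entirely by keeping the Euclidean metric: for a genuine circular annulus, the unstable slice at height $c$ has length at most
\[
2\sqrt{(r+\epsilon)^2-c^2}-2\sqrt{\max(r^2-c^2,0)}\le 2\sqrt{(r+\epsilon)^2-r^2}\lesssim\sqrt{r\epsilon},
\]
uniformly in $c$. In other words, the two bounding circles become nearly tangent at the top and bottom, so even the ``new'' leaves intersect the shell in a segment of length only $O(\sqrt{r\epsilon})$, not $O(r)$. Then
\[
\mu(A_{r,\epsilon})=\int_\Gamma m(\gamma\cap A_{r,\epsilon})\,d\nu(\gamma)\lesssim\sqrt{r\epsilon}\,\nu(\Gamma)\lesssim\sqrt{\epsilon},
\]
using only that $\nu$ is finite and the conditional measures are comparable to Lebesgue, with no transverse regularity at all, yielding $\tilde d=1/2$. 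The moral is that circular balls are essential here: replacing them with squares reintroduces a flat boundary whose unstable slices do not shrink with $\epsilon$, and that is precisely what forces your extra hypothesis on $\nu$. Your disintegration and the derivation $\nu(I^s_r)\sim r^{d-1}$ are fine as far as they go, but they are not needed for the result and do not repair the missing regularity step.
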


\begin{proof}
 As before, the conditional measure $\mu_{\gamma}$  is equivalent to Lebesgue in the local unstable direction, and $r$ is small, i.e., $r<1$.
 Let $\epsilon = r^w$, with $w>1$.
 We need to prove that the measure of the annular region $S= A_{r+\epsilon} (x_0) / A_r (x_0)$ is small.

We decompose $\mu$ in a neighborhood of $x_0$ as follows 
$$\mu (A)=\int_{\gamma \in \Gamma} m(\gamma \cap A) d\nu (\gamma)$$
where $\gamma$ is the foliation into local unstable manifolds. Since we have a local product structure at $x_0$, these extend all the way across
a sufficiently small rectangular neighborhood of $x_0$.

Now consider the equation of the circles $x^2+y^2=r^2$ and $x^2+y^2=(r+\epsilon)^2=r^2 +2r^{w+1}+r^{2w}$.
The larger  circle contains some local unstable manifolds which are not in the smaller circle but the greatest
length of  these is found by setting $y^2=r^2$ in the second equation and solving for $\delta x \le  r^{\frac{w+1}{2}}$. Their
length is less than $ r^{\frac{w+1}{2}}$, so that 
\[
 \mu (S) \le \int_\Gamma (S\cap \gamma)d\nu (\gamma) < r^{\frac{w+1}{2}} < \epsilon^{\frac{w+1}{2w}}<\epsilon^{1/2}
\]

\end{proof}

\begin{lemma}\label{annulus}
 \begin{enumerate}[(a)]
  \item For $\mu$ a.e. $x_0$, for every $\epsilon > 0$, there exists an $N \in \N$ such that for all $n\ge N$
  \[
   \frac{1}{d+\epsilon}(v+\log n) \le u_n(v) \le \frac{1}{d-\epsilon}(v+\log n)
  \]
  where $d$ is the local dimension.
 \item Denote by $S(n,x_0) = A_{e^{-u_n}}(x_0)/A_{e^{-u_n}-e^{-u_n^2}}(x_0)$, the annulus region between balls centered at $x_0$ of radius $e^{-u_n}$ and $e^{-u_n}-e^{-u_n^2}$. There exists $\delta = \delta(x_0) \in (0,1)$ such that for n large enough 
 \[
  \mu(S(n,x_0)) \le C_3 n ^{-2\delta \upsilon-\delta \log n}
 \]
\end{enumerate}
\end{lemma}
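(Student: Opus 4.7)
Both parts reduce to short manipulations of the definition of local dimension, together with the explicit description of the level sets $U_n=\{X_0>u_n\}$ as Euclidean balls centered at $x_0$. For (a) I would invoke the defining property of $d$ to sandwich $\mu(B_{e^{-u_n}}(x_0))$ between $(e^{-u_n})^{d\pm\epsilon}$ and combine this with the normalising condition $\mu(U_n)=e^{-v}/n$. For (b) I would apply Assumption~A (Lemma~\ref{AssumptionA}) to the annulus $S(n,x_0)$ to convert the shell estimate into a bound in terms of $u_n$, and then substitute the bound on $u_n$ coming from (a).

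Concretely, for part (a) fix $\epsilon>0$. By the local dimension hypothesis, for $\mu$-a.e.\ $x_0$ there exists $r_0>0$ such that $r^{d+\epsilon}\le \mu(B_r(x_0))\le r^{d-\epsilon}$ for all $0<r<r_0$. Since $\varphi(x)=-\log d(x,x_0)$, the set $U_n$ is precisely $B_{e^{-u_n}}(x_0)$, so $\mu(U_n)=e^{-v}/n$ reads $(e^{-u_n})^{d+\epsilon}\le e^{-v}/n\le (e^{-u_n})^{d-\epsilon}$ once $n$ is large enough that $e^{-u_n}<r_0$. Taking logs and rearranging yields $\frac{v+\log n}{d+\epsilon}\le u_n\le \frac{v+\log n}{d-\epsilon}$, which is the asserted two-sided bound.

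For part (b), view $S(n,x_0)=A_{r,\epsilon_n}(x_0)$ as the shell with inner radius $r=e^{-u_n}-e^{-u_n^2}$ and width $\epsilon_n=e^{-u_n^2}$. For $n$ large one has $r\sim e^{-u_n}$ and $\epsilon_n=r^{u_n(1+o(1))}$, so $0<\epsilon_n\ll r<1$ and Lemma~\ref{AssumptionA} applies, delivering $\mu(S(n,x_0))\le \epsilon_n^{1/2}=e^{-u_n^2/2}$. Substituting the lower bound $u_n\ge (v+\log n)/(d+\epsilon)$ from (a) and expanding the square gives
\[
\mu(S(n,x_0))\le \exp\!\left(-\frac{(v+\log n)^2}{2(d+\epsilon)^2}\right)=e^{-\tilde\delta v^2}\cdot n^{-2\tilde\delta v}\cdot n^{-\tilde\delta\log n},
\]
where $\tilde\delta=1/(2(d+\epsilon)^2)$. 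Absorbing $e^{-\tilde\delta v^2}$ into $C_3$ and then setting $\delta=\min(\tilde\delta,1-\eta)$ for any small $\eta>0$ produces the claimed bound with $\delta\in(0,1)$.

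The only delicate point is checking that the geometric hypothesis $0<\epsilon_n\ll r<1$ required by Assumption~A genuinely holds for the particular width $\epsilon_n=e^{-u_n^2}$; this is automatic once $u_n>1$, which part~(a) guarantees for all sufficiently large $n$. The secondary bookkeeping of ensuring $\delta\in(0,1)$ regardless of whether the local dimension $d$ is close to $0$ or not is handled by the truncation $\delta=\min(\tilde\delta,1-\eta)$, and everything else is direct computation.
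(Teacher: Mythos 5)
Your proposal is correct and follows essentially the same route as the paper: part (a) by sandwiching $\mu(U_n)=e^{-v}/n$ between $(e^{-u_n})^{d\pm\epsilon}$ using the local dimension, and part (b) by applying Assumption~A (Lemma~\ref{AssumptionA}) to the shell of width $e^{-u_n^2}$ and substituting the lower bound on $u_n$ from (a). The only (harmless) additions are that you track the explicit exponent $1/2$ from the proof of Lemma~\ref{AssumptionA} rather than a generic $\delta$, spell out the truncation needed to keep $\delta\in(0,1)$, and verify the hypothesis $0<\epsilon_n\ll r$.
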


\begin{proof}
 \begin{enumerate}[(a)]
  \item By the definition of the local dimension, for any $\epsilon>0$, there is an $N$ such that for all $n\ge N$, $(e^{-u_n})^{(d+\epsilon)} \le \mu(U_n) \le (e^{-u_n})^{(d-\epsilon)}$, and $\mu(U_n) = e^{-v}/n$. We get immediately
  \[
    \frac{1}{d+\epsilon}(v+\log n) \le u_n(v) \le \frac{1}{d-\epsilon}(v+\log n)
  \]
  \item According to Subsection \ref{localdim} and \ref{localstr}, Lorenz system has local dimension and local structure, and by Lemma \ref{AssumptionA}, Assumption A is satisfied for Lorenz system. So there exists a $\delta \in (0,1)$ such that
  \begin{eqnarray*}
    \mu(S(n,x_0)) &\le& C (e^{-(u_n^2)})^\delta \\
                             &= &C e^{-(u_n^2)\delta} \\
                             &\le&C \exp\left(-\frac{\delta}{(d+\epsilon)^2}(v+\log n)^2\right)\\ 
                             &\le& C_3n ^{-2\delta' \upsilon-\delta' \log n}
  \end{eqnarray*}
\end{enumerate}

\end{proof}

\begin{proof}[{\bf Proof of Theorem \ref{EVL}}]\mbox{}\\*
To show $EVL$ for the Lorenz system, it suffices to show $D_3(u_n)$ and $D'(u_n)$.
Then, as mentioned in Remark \ref{d3exp}, $D_3(u_n)$ is easily proven if the system satisfies Assumption A and good enough estimates for decay of correlations. We already know Assumption A is satisfied for Lorenz systems, and we have exponential decay of correlation for Lorenz map
\[
 |\int \phi \psi \circ F^n d \mu - \int \phi d \mu \int \psi d \mu | \le C \alpha^n ||\phi||_{Lip} ||\psi||_{Lip}
\] 
So we prove $D_3(u_n)$ in the following paragraph.

By part (b) of Lemma \ref{annulus}, we have $\mu(S(n,x_0)) \le C_3 n ^{-2\delta \upsilon-\delta \log n}$, where $\delta = \delta(x_0) \in (0,1)$ for $n$ large enough and $\upsilon$ could be any number. 
Take $\phi_n$ to be the Lipschitz approximation of $\textbf{1}_{U_n}=\textbf{1}_{\{X_0 >u_n\}}$ such that $\phi_n(x) = 1$ if $x$ inside $A_{e^{-u_n}-e^{-u_n^2}}(x_0)$, $\phi_n=0$ if $x$ is outside $U_n$, and decays to 0 at a linear rate on $S(n,x_0)$.
So we have the estimate $||\phi_n - \textbf{1}_{\{X_0 > u_n\}}||_1<\mu(S(n,p))$ and $||\phi_n||_{Lip} \le e^{-u_n^2}$. Also let
$\psi_n = \textbf{1}_{\{M_l \le u_n\}}$. By (\cite{GHN}, Lemma 3.1), we then have 
\begin{eqnarray*}
 |\int \phi_n \psi_n \circ F^t d \mu - \int \phi_n d \mu \int \psi_n d \mu | \le 
 O(1)(||\phi_n||_{\infty} \tau_1^{\lfloor t/2 \rfloor}+||\phi_n||_{Lip}\alpha^{\lfloor t/2 \rfloor})
\end{eqnarray*}
and
\begin{eqnarray*}
& &|\mu(\{X_0 >u_n\} \cap \{M_{t,l} \le u_n\}) - \mu(\{X_0 >u_n\}\mu(\{M_l \le u_n\}))| \\ 
&\le& |\int (\textbf{1}_{\{X_0 >u_n\}}-\phi_n)\psi_n \circ F^t d\mu|+|\int \phi_n \psi_n \circ F^t d \mu - \int \phi_n d \mu \int \psi_n d \mu |\\& &+|\int(\textbf{1}_{\{X_0 >u_n\}}-\phi_n)d\mu \int \psi_n d\mu|\\
&\le& O(1)( n ^{-2\delta \upsilon-\delta \log n} + ||\phi_n||_{\infty} \tau_1^{\lfloor t/2 \rfloor}+||\phi_n||_{Lip}\alpha^{\lfloor t/2 \rfloor})
\end{eqnarray*}
where $\tau_1$ is from \cite[Proposition 1.1]{GHN}. Let $\gamma(n,t)=  n ^{-2\delta \upsilon-\delta \log n}+ ||\phi_n||_{\infty} \tau_1^{\lfloor t/2 \rfloor}+||\phi_n||_{Lip}
\alpha^{\lfloor t/2 \rfloor}$, and $||\phi_n||_{Lip} \le e^{-u_n^2} \le O(1) n ^{-2\tilde{\delta} \upsilon-\tilde{\delta} \log n}$(by similar argument we did to get $\mu(S(n,x_0))$). Take $t = t_n = (\log n)^5$, 
so that $n\gamma(n,t_n) \to 0$ as $n \to \infty$. Thus $D_3(u_n)$ is established.


We now establish $D'(u_n)$. Note $F^{-j} (U_n) = \{ X_j > u_n\}$, then
\[
\mu(X_0 >u_n, X_j > u_n)=\mu(U_n \cap F^{-j} U_n) \le \int_{\gamma \in \Gamma}m(\pi (U_n)_\gamma \cap T^{-j} (\pi (U_n)_\gamma))d\nu(\gamma)
\]
Where $\pi$ is the projection map onto the first dimension. For each leaf $\gamma \in \Gamma$, define $B_{r_n,\gamma} = \pi (U_n)_\gamma= B(q_\gamma e^{-u_n}, \pi(x_0))$, where $r_n = e^{-u_n}$, $0 \le q_\gamma \le 1$.

Consider points where the local unstable manifold is less  than $r^3$, so that the integral splits as follows 
\begin{eqnarray*}
 \int_{\gamma \in \Gamma}m(\pi (U_n)_\gamma \cap T^{-j} (\pi (U_n)_\gamma))d\nu(\gamma)&=& \int_{\gamma \in \Gamma_1}m(\pi (U_n)_\gamma \cap T^{-j} (\pi (U_n)_\gamma))d\nu(\gamma)\\
& &+\int_{\gamma \in \Gamma_2}m(\pi (U_n)_\gamma \cap T^{-j} (\pi (U_n)_\gamma))d\nu(\gamma) 
\end{eqnarray*}
where $\Gamma_1$ is the set of local unstable manifolds which has length less than $r^3$, and $\Gamma_2 = \Omega / \Gamma_1$. The reason for doing so is because if the local unstable manifold has a short length, the point in the projection probably has no short return. 

For $\gamma \in \Gamma_2$, define
\[
E_{k, \gamma} =\{x\in B_{r_k, \gamma}:\,d(T^jx,x)<\frac{1}{k^{1/3}},\,\textrm{for some}\:1\leq j\leq (\log k)^5\}.
\]
By \cite[Proposition 4.2]{GHN},  there exists $0<a<1,0< \tilde\theta<1$ such that
$$m(E_{k, \gamma})<\tilde\theta^{(\log k^{1/3})^{a}}$$
We only need $a <1/2$, so we take $a=1/3$.

Let $0<\beta \le \frac{1}{2}$ and let $0<\rho<1$ such that $\rho\beta < \beta/3$. 

Define the set 
\begin{equation}
 F_{k,\gamma}:=\{m(B_{q_\gamma \exp(-k^\beta)}(x)\cap E_{\exp(k^{3\beta})})\ge m(B_{q_\gamma \exp(-k^\beta)}(x))\exp(-k^{\beta\rho}) \}.
\end{equation}
If $x\in F_{k,\gamma}$ then 
\begin{equation}\label{eq:F_k.def.1}
 \frac{m(B_{q_\gamma \exp(-k^\beta)}(x)\cap E_{\exp(k^{3\beta})})}{m(B_{q_\gamma \exp(-k^\beta)}(x))}\ge \exp(-k^{\beta\rho});
\end{equation}

If we define 
\[
 M_l(x):=\sup_{r>0}\frac{1}{m(B_r (x))}\int_{B_r (x)} \textbf{1}_{E_l}(y)dm(y)
\]
we see immediately from the definition of $M_l(x)$ and \eqref{eq:F_k.def.1} that for every $x\in F_{k,\gamma}, M_{e^{k^{3\beta}}}(x)\ge e^{-k^{\beta\rho}}$. Hence
\begin{equation}\label{eq:F_k.def.2}
 F_{k,\gamma}\subset \{M_{e^{k^{3\beta}}}(x)\ge e^{-k^{\beta\rho}}\}.
\end{equation}
A theorem of Hardy and Littlewood \cite[Theorem 2.19]{mattila} implies that
\[
 m(|M_l|>c)\le \frac{\|\textbf{1}_{E_l}\|_1}{c};
\] 
$||\cdot||_1$ is with respect to the one-dimensional Lebesgue measure.
As $m(E_l)\le \C\tilde\theta^{(\log l^{1/3})^{1/3}}$ (recall $a=1/3$), 
\[
 m(F_{k,\gamma})\le \C m(E_{\exp(k^{3\beta})})e^{k^{\beta\rho}}\le  \C(e^{\alpha k^{\beta/3}+k^{\beta\rho}})
\] 
where $\alpha:= \log \tilde{\theta}$ and $k$ is large enough. Since $\beta/3 > \beta\rho$,  $\sum_{k>0}m(F_{k,\gamma})<\infty$.
 By the  Borel Cantelli lemma, $m(\limsup F_{k,\gamma}) = 0$, and hence for $m$ almost every $x$ there exists an $N_x$ such that for all $k\ge N_x$, $x\notin F_{k,\gamma}$ for each $\gamma$.

Let $x_0$ be such a generic point, and let $N_{x_0}$ be the corresponding index beyond which $x_0$ does not belong to any $F_{k,\gamma}$.
Since $\lim_{k\to \infty} e^{(k+1)^{\beta} }e^{-k^{\beta}}=1$  the fact that we restricted to a subsequence is of no consequence,
and we obtain the following estimate for all $n$ sufficiently large.  If $1\le j\le (\log n)^5$, then 
\begin{equation}\label{shortreturn}
m\left(B_{r_n, \gamma}\cap T^{-j} B_{r_n,\gamma} \right)\le  m(B_{r_n, \gamma})\exp(-u_n^{\rho}) 
\end{equation}
Summing over $1\le j\le (\log n)^5$ and taking limits as $n\to\infty$ we obtain: 
\begin{eqnarray*}
 & &n \sum_{1}^{(\log n)^5} \int_{\gamma \in \Gamma_2}m(\pi (U_n)_\gamma \cap T^{-j} (\pi (U_n)_\gamma))d\nu(\gamma) \\
 &\le&  n \sum_{1}^{(\log n)^5} e^{-u_n^\rho}\int_{\gamma \in \Gamma_2}  m(\pi (U_n)_\gamma)d\nu(\gamma) \\
 &\le & n \sum_{1}^{(\log n)^5} e^{-u_n^\rho} \mu(U_n) \\
 & = & (\log n)^5 e^{-u_n^\rho} e^{-v} \to 0
\end{eqnarray*}
since $u_n$ has estimates in part(a) of Lemma \ref{annulus}.

And for $\Gamma_1$, 
\[
  n \sum_{1}^{(\log n)^5} \int_{\gamma \in \Gamma_1}m(\pi (U_n)_\gamma \cap T^{-j} (\pi (U_n)_\gamma))d\nu(\gamma) \le n (\log n)^5 e^{-3u_n} \to 0 
\]
Consequently we have 
\[
 n \sum_{1}^{(\log n)^5} \mu ( X_0 >  u_n, X_0 \circ F^j > u_n) \to 0
\]

Finally, similarly to the argument in the case of  Planar Dispersing Billiard Maps in \cite[section 4.1.3]{GHN}, we use  exponential decay of correlations to  show
 \[
 \lim_{n\rightarrow \infty} n \sum_{(\log n)^5}^{p=\sqrt{n}} \mu ( X_0 >  u_n, X_0 \circ F^j > u_n)=0.
\]

\end{proof}

Now we can prove Theorem \ref{BCLcircle}.

\begin{proof}[{\bf Proof of Theorem \ref{BCLcircle}}]\mbox{}\\*
We follow the steps of proof of Theorem \ref{BCLsquare}, and use the techniques from the proof of Theorem \ref{EVL}, we have:

\begin{eqnarray*}
 & &\mu(A_i \cap F^{-(j-i)}A_j) \le \mu(A_i \cap F^{-(j-i)} A_i)\\ &\sim& \int_\Gamma m(\tilde{x} \in (A_i)_{\gamma}: F^{j-i}(\tilde{x}) \in A_i ) d\nu(\gamma) \\
  &\le& \int_\Gamma m(\pi (A_i)_{\gamma} \cap T^{-(j-i)} (\pi (A_i)_{\gamma})) d\nu(\gamma)\\
  &=& \int_{\Gamma_1} m(\pi (A_i)_{\gamma} \cap T^{-(j-i)} (\pi (A_i)_{\gamma})) d\nu(\gamma)\\
  & &+\int_{\Gamma_2} m(\pi (A_i)_{\gamma} \cap T^{-(j-i)} (\pi (A_i)_{\gamma})) d\nu(\gamma)\\
\end{eqnarray*}
where $\Gamma_1$ is the set of local unstable manifolds which has length less than $r^3$, and $\Gamma_2 = \Omega / \Gamma_1$. Thus,
\[
\int_{\Gamma_1} m(\pi (A_i)_{\gamma} \cap T^{-(j-i)} (\pi (A_i)_{\gamma})) d\nu(\gamma) \le r_i^3 \le \mu(A_i)^{\frac{3}{d+\eta}}
\]
and by \eqref{shortreturn}
\[
\int_{\Gamma_2} m(\pi (A_i)_{\gamma} \cap T^{-(j-i)} (\pi (A_i)_{\gamma})) d\nu(\gamma) \le \mu(A_i)\exp(-u_i^{\rho})
\] 
Let  $f_k = \textbf{1}_{A_k} \circ F^k(x,y)$ and $E(f_k) = \mu(A_k)$, so that
\begin{eqnarray*}
 & & \sum_{j=i+1}^n (E(f_i f_j)-E(f_i)E(f_j)) \\
 &=& \sum_{j=i+1}^n \mu(A_i \cap F^{j-i} A_j) - \mu(A_i)\mu(A_j)\\
 &=&  \sum_{j=i+1}^{i+(\log i)^5} \left[ \mu(A_i \cap F^{j-i} A_j) - \mu(A_i)\mu(A_j)\right]+  \sum_{j>i+(\log i)^5} \left[ \mu(A_i \cap F^{j-i} A_j) - \mu(A_i)\mu(A_j)\right]\\\\
 &\le& \sum_{j=i+1}^{i+(\log i)^5}  \left[\mu(A_i)^{\frac{3}{d+\eta}}+\mu(A_i)\exp(-u_i^{\rho}) \right]+ C\mu(A_i)\\
 &\le&   (\log i)^5 \mu(A_i)^{\frac{3}{d+\eta}-1}\mu(A_i)+ C_1 \mu(A_i)+ C\mu(A_i)\le \tilde{C} \mu(A_i)
\end{eqnarray*}
where $0<\rho<1/3$. Therefore we have the (SP) property, and the Strong Borel Cantelli property then follows.
\end{proof}

\section{Lorenz Flow}\label{flow}

Let $M$ be the Riemannian manifold, associated with Lorenz flows,  endowed with a metric $d_M$, and $f_t: M \to M$ the Lorenz $C^1$-flow. $\Omega \subset M$ is a 
transverse cross-section of the flow which is a $C^1$-submanifold with boundary, as we stated in previous sections. We know $F: \Omega^* \to \Omega$  preserves a probability measure $\mu$, where $\Omega= [-1/2, 1/2] \times [-1/2, 1/2]$ and $\Omega^* = ([-1/2, 1/2]\backslash\{0\}) \times [-1/2,1/2]$. 
Let $h: \Omega \to \R_+$ be the 
first return time of the flow to $\Omega$,
and $h \notin L^1(\mu)$. Consider the suspension space
\[
 \Omega^h = \{(p,u) \in \Omega \times \R~ | ~0 \le u \le h(p)~\}/ \sim ,~~~\text{where}~~ (p, h(p)) \sim (F(p), 0) 
\]
We model the flow $f_t: M \to M$ in the standard way by the 
suspension flow $\tilde{f}_t: \Omega^h \to \Omega^h$, $\tilde{f}_t(p,u) = (p, u+t)/ \sim$. Denote the 
metric on $\Omega$ by $d_\Omega$, and we define a metric $d_{\Omega^h}$ on $\Omega^h$  by 
\[
d_{\Omega^h} ((p,u),(q,v)) = \sqrt{d_{\Omega}(p,q)^2+|u-v|^2} 
\]
Then we can introduce a projection map
$\pi_M : \Omega^h \to M$, $(p,t) \mapsto f_t(p)$, which is a local $C^1$-diffeomorphism. $\mu$ is an invariant ergodic probability measure for the first return 
map, i.e. our Lorenz map, $F: \Omega^* \to \Omega$. This induces (in the standard way) an invariant measure $\mu^h$, on the suspension $\Omega^h$, which is given by $d\mu \times dm/\bar{h}$ and $\bar{h} = \int_{\Omega} h d\mu$. 
Then $\mu^h$  
determines a $f_t$-invariant measure $\mu_M$ on $M$ by $\mu_M(A) = \mu^h(\pi_M^{-1}A)$ for measurable sets $A$. 

Consider a measurable observation $\varphi: \Omega^h \to \R$ such that $\varphi(x) = -\log d_{\Omega^h}(x, x_0)$ where $x_0$ is any point in $\Omega^h$, then $\varphi$ has a logarithmic singularity at $x_0$.  Define $\Phi: \Omega \to \R$ by
\[
 \Phi(p) := \max \{ \varphi(f_s(p)) ~|~ 0 \le s < h(p) \}
\]
Denote
\[
\varphi_t (p) := \max \{\varphi(f_s(p))|0\le s < t\}
\]
\[
\Phi_N(p) := \max\{\Phi(F^k(p))|0 \le k < N\}
\]
Then we have our main theorem in the flow case: 
\begin{thm}
Assume that $F$ is the Lorenz map and $f_t$ is the corresponding Lorenz flow. Assume the levels $\{u_n\}$ satisfy
\[
n \mu(\Phi_0 > u_n) \to e^{-v}
\] 
which gives some normalizing constants $a_n >0$ and $b_n$ such that
\[
\lim_{\epsilon\to 0}\limsup_{n \to \infty} a_n |b_{[n+\epsilon n]}-b_n|=0
\]
\[
\lim_{\epsilon\to 0}\limsup_{n \to \infty} \left|1 - \frac{a_{[n+\epsilon n]}}{a_n} \right| = 0
\]
Then $\Phi_N$ satisfies a Type I extreme value law, i.e.  
\[
a_N(\Phi_N-b_N) \rightarrow_d e^{-e^{-v}}
\]
implies that
$\varphi_t$ also satisfies a Type I extreme value law, 
\[
a_{\lfloor T/\bar{h} \rfloor}(\varphi_T-b_{\lfloor T/\bar{h} \rfloor}) \rightarrow_d e^{-e^{-v}}
\]
\end{thm}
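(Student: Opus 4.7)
The plan is the classical suspension-flow reduction: relate the continuous-time maximum $\varphi_T$ to the discrete Poincaré maximum $\Phi_N$ via the return time and then transfer the extreme value law using the regularity conditions assumed on $(a_n)$ and $(b_n)$. Write $\tau_N(p):=\sum_{k=0}^{N-1} h(F^k p)$ for the time of the $N$-th return of the flow to $\Omega$. By construction
\[
\Phi_N(p)=\max\{\varphi(f_s(p)):0\le s<\tau_N(p)\}=\varphi_{\tau_N(p)}(p),
\]
so $\varphi_T$ and $\Phi_N$ differ only through the random time $\tau_N$ versus the deterministic time $T$.

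First I would apply Birkhoff's ergodic theorem to $h$ with respect to the $F$-invariant ergodic probability measure $\mu$ to obtain $\tau_N(p)/N\to\bar h$ for $\mu$-a.e.\ $p$. Given $\epsilon>0$, this produces a set $E_\epsilon\subset\Omega$ with $\mu(E_\epsilon)>1-\epsilon$ on which, for all $N$ larger than some $N_\epsilon$, $(1-\epsilon)\bar h N\le \tau_N(p)\le (1+\epsilon)\bar h N$. Setting $N:=\lfloor T/\bar h\rfloor$ and $N^\pm:=\lfloor(1\pm 2\epsilon)N\rfloor$, this gives the sandwich $\tau_{N^-}(p)\le T\le\tau_{N^+}(p)$ for $p\in E_\epsilon$ and $T$ large, which lifts along the suspension (since $\Phi_N$ is monotone in $N$ and the initial partial fiber contributes an $O(1)$ error that gets absorbed) to
\[
\Phi_{N^-}(p)\le \varphi_T(p)\le \Phi_{N^+}(p).
\]

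Next I would push this sandwich through the normalizations. By hypothesis $a_N(\Phi_N-b_N)\Rightarrow e^{-e^{-v}}$ in law under $\mu$. The two regularity conditions
\[
\lim_{\epsilon\to 0}\limsup_{n\to\infty}a_n|b_{[n+\epsilon n]}-b_n|=0,\qquad \lim_{\epsilon\to 0}\limsup_{n\to\infty}\left|1-\frac{a_{[n+\epsilon n]}}{a_n}\right|=0
\]
are precisely what is needed to ensure $a_{N^\pm}(\Phi_{N^\pm}-b_{N^\pm})$ and $a_N(\Phi_{N^\pm}-b_N)$ have the same weak limit as $N\to\infty$ followed by $\epsilon\to 0$. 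Applying them to the sandwich, and using $\lfloor T/\bar h\rfloor=N$, yields that both $a_N(\Phi_{N^-}-b_N)$ and $a_N(\Phi_{N^+}-b_N)$ converge in distribution to $e^{-e^{-v}}$, hence so does $a_N(\varphi_T-b_N)$ by squeezing, on the base $\Omega$. A short argument transfers the statement from $\mu$ on $\Omega$ to $\mu_M$ on $M$ via $\mu_M=\pi_{M*}\mu^h$ and the fact that the observable $\varphi_T(f_s(p))$ differs from $\varphi_T(p)$ only by shifting the starting point within one flow segment of $\mu^h$-measure $\sim 1/\bar h$, which is again absorbed by the same regularity conditions on $a_n,b_n$.

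The main obstacle I anticipate is making the sandwich rigorous at the level of convergence in distribution rather than pointwise. The Birkhoff convergence is only almost sure, so on a small exceptional set $\varphi_T$ could in principle exceed $\Phi_{N^+}$ or fall below $\Phi_{N^-}$. One handles this by restricting to $E_\epsilon$ when computing the distribution function of $a_{\lfloor T/\bar h\rfloor}(\varphi_T-b_{\lfloor T/\bar h\rfloor})$, using $\mu_M(E_\epsilon^c)<\epsilon$ to bound the error uniformly in $T$, and finally letting $\epsilon\to 0$ after taking the $T\to\infty$ limit. A secondary technical point is the handling of the first and last partial flow segments (the fiber coordinate $s\in[0,h(p))$ and the overshoot $T-\tau_N$); both are sub-leading and disappear in the same double limit, because the sup over one flow segment is swallowed by going from $N^-$ to $N^+$.
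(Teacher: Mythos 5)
The paper's own proof is a two-line citation: the theorem follows from Theorem~2.6 of Holland--Nicol--T\"or\"ok (HNT) applied to Theorem~\ref{EVL} of this paper, with HNT's Sublemma~4.18 invoked to handle the fact that the return time $h$ of the Lorenz flow is unbounded. Your proposal reconstructs the content of that cited theorem --- the sandwich of $\varphi_T$ between $\Phi_{N^-}$ and $\Phi_{N^+}$ via Birkhoff's theorem for $\tau_N/N\to\bar h$, followed by transfer through the regularity conditions on $(a_n)$ and $(b_n)$ --- so the underlying mechanism is the same one the paper outsources. That part of your outline is sound.

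However, there is a genuine gap precisely at the point the paper singles out as the technical crux. You dispatch the first and last partial flow segments with ``both are sub-leading and disappear in the same double limit,'' and you estimate the contribution of the initial fiber as having $\mu^h$-measure $\sim 1/\bar h$. For the Lorenz flow the roof function $h$ is \emph{unbounded} near the singular point, so a single flow segment over a base point $p$ has $\mu^h$-weight $h(p)/\bar h$, which is not uniformly $O(1)$; the contribution of the initial partial segment to the distribution of $\varphi_T$ therefore cannot be absorbed by a blanket ``$O(1)$ error.'' Controlling the set of suspension points whose fiber coordinate is large (and whose remaining flow time to the section is long, or which lie over a tall fiber) requires a tail estimate on $h$ under $\mu$ --- this is exactly what \cite[Sublemma 4.18]{HNT} supplies, and your argument as written silently assumes boundedness at this step. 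Similarly, when you move from convergence for $\mu$ on $\Omega$ to convergence for $\mu_M$, you need to average over the fiber coordinate, and the unboundedness of $h$ resurfaces; a bounded-roof argument does not transfer verbatim. If you insert the required tail control on $h$ (integrability of $h$ plus a Markov-type estimate giving $\mu\{h>L\}\to 0$ fast enough to kill the exceptional fibers in the double limit), your outline closes. Also, minor: the normalization in the statement uses $\mu(\Phi_0>u_n)$, i.e.\ the law of $\Phi$ under $\mu$, so the sequence $(u_n)$ is calibrated to the section, not to $\mu_M$ directly; your proposal is consistent with this but it is worth stating explicitly since it is where the factor $\bar h$ in $\lfloor T/\bar h\rfloor$ originates.
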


\begin{proof}
It is a consequence of \cite[Theorem 2.6]{HNT} and Theorem \ref{EVL}. \cite[Sublemma 4.18]{HNT} takes care of the issue that $h$ is not bounded. 
\end{proof}

\section{Appendices}

\subsection{Gal-Koksma Theorem.}
We  recall the following result of Gal and Koksma as formulated by  W. Schmidt~\cite{W1,W2} and stated
by Sprindzuk~\cite{Sprindzuk}:

  Let $(\Omega,\mathcal{B},\mu)$ be a probability space and let $f_k
  (\omega) $, $(k=1,2,\ldots )$ be a sequence of non-negative $\mu$
  measurable functions and $g_k$, $h_k$ be sequences of real numbers
  such that $0\le g_k \le h_k \le 1$, $(k=1,2, \ldots,)$.  Suppose
  there exists $C>0$ such that
  \begin{equation} 
    \tag{$*$} \int \left(\sum_{m<k\le n}( f_k (\omega) - g_k)
    \right)^2\,d\mu \le C \sum_{m<k \le n} h_k
  \end{equation}
  for arbitrary integers $m <n$. Then for any $\epsilon>0$
  \[
  \sum_{1\le k \le n} f_k (\omega) =\sum_{1\le k\le n} g_k   +
  O (\Theta^{1/2} (n) \log^{3/2+\epsilon} \Theta (n)
  )
  \]
  for $\mu$ a.e.\ $\omega \in \Omega$, where $\Theta (n)=\sum_{1\le k
    \le n} h_k$.

\bibliographystyle{plain}
\bibliography{lorenz3}

\end{document}